\renewcommand{\baselinestretch}{\baselinestretch}
\renewcommand{\baselinestretch}{1.1}
\numberwithin{equation}{section}
\newtheorem{thm}{Theorem}[section]
\newtheorem{lem}[thm]{Lemma}
\newtheorem{prop}[thm]{Proposition}
\newtheorem{rmk}[thm]{Remark}
\newcommand{\lra}{{\ \longrightarrow \ }}
\newcommand{\nlra}{{\ \longarrownot\longrightarrow \ }}
\newcommand{\gen}{\text{gen}}
\newcommand{\rank}{\text{rank}}
\newcommand{\n}{{\mathbb N}}
\newcommand{\z}{{\mathbb Z}}
\newcommand{\q}{{\mathbb Q}}
\newcommand{\Mod}[1]{\ (\mathrm{mod}\ #1)}
\newcommand{\bx}{\bm x}
\newcommand{\by}{\bm y}
\newcommand{\fs}{\mathfrak s}
\begin{document}

\title{Tight universal quadratic forms}

\author{Mingyu Kim}
\address{Department of Mathematics, Sungkyunkwan University, Suwon 16419, Korea}
\email{kmg2562@skku.edu}
\thanks{This research of the first author was supported by Basic Science Research Program through the National Research Foundation of Korea(NRF) funded by the Ministry of Education (NRF-2019R1A6A3A01096245) and (NRF-2021R1C1C2010133).}

\author{Byeong-Kweon Oh}
\address{Department of Mathematical Sciences and Research Institute of Mathematics, Seoul National University, Seoul 151-747, Korea}
\email{bkoh@snu.ac.kr}
\thanks{This work of the second author was supported by the National Research Foundation of Korea (NRF-2019R1A2C1086347)  and (NRF-2020R1A5A1016126).}

\subjclass[2010]{Primary 11E12, 11E20, 11E41}

\keywords{Tight universal quadratic forms}

\begin{abstract}  For a positive integer $n$, let $\mathcal T(n)$ be the set of all integers greater than or equal to $n$. An integral quadratic form $f$ is called tight $\mathcal T(n)$-universal if the set of nonzero integers that are represented by $f$ is exactly $\mathcal T(n)$. The smallest possible rank over all  tight $\mathcal T(n)$-universal quadratic forms is defined by $t(n)$.     
In this article, we find all tight $\mathcal T(n)$-universal diagonal quadratic forms.  We also prove that $t(n) \in \Omega(\log_2(n)) \cap O(\sqrt{n})$. Explicit lower and upper bounds for $t(n)$ will be provided for some small integer $n$.    
\end{abstract} 

\maketitle

\section{Introduction}
For a positive integer $k$, an integral quadratic form $f$ of rank $k$ is a homogeneous quadratic polynomial 
$$
f(x_1,x_2,\dots,x_k)=\sum_{i,j=1}^k f_{ij}x_ix_j \quad (f_{ij}=f_{ji} \in \z)
$$
with $k$ variables such that the discriminant $\det(f_{ij})$  is nonzero.  We say an integer $a$ is represented by $f$ if there is an integer solution of the diophantine equation $f(x_1,x_2,\dots,x_k)=a$. The set of ``nonzero" integers that are represented by $f$ is denoted by $Q(f)$. It is quite an old problem in number theory to determine the set $Q(f)$ explicitly for any given quadratic form $f$. If $f$ is indefinite, that is, the corresponding symmetric matrix $(f_{ij})$ is indefinite,  and if the rank of $f$ is greater than $2$, then so called, the spinor genus theory works to determine the set $Q(f)$, effectively (for this, see \cite{hs}).  From now on, we always assume that $f$ is positive definite.  If $k$ is greater than $3$, then there is a general method to determine $Q(f)$ under the assumption that the minimum representation number $\min(f)$ is not too big (for this, see  \cite{han} and \cite{hi}). If $k=3$ or $\min(f)$ is  large, then, as far as the authors know,  only  sufficiently large integers in $Q(f)$  can effectively be determined. 

Let $n$ be a positive integer, and let $\mathcal T(n)$ be the set of all integers greater than or equal to $n$. We say an integral quadratic form $f$ is {\it $\mathcal T(n)$-universal} if $f$ represents all integers greater than or equal to $n$, that is, $\mathcal T(n) \subseteq Q(f)$.  We say a $\mathcal T(n)$-universal quadratic form $f$ is {\it tight} if $\mathcal T(n)=Q(f)$. A tight $\mathcal T(1)$-universal quadratic form is simply called  {\it universal}.   The smallest possible rank over all  tight $\mathcal T(n)$-universal quadratic forms is defined by $t(n)$.  The famous Lagrange's four square theorem  says that $t(1)=4$, that is, any positive integer is a sum of four squares, but no less.  Note that the sum of four squares $x^2+y^2+z^2+t^2$ is $\mathcal T(n)$-universal for any $n\ge 1$, whereas it is not  tight  $\mathcal T(n)$-universal for any $n\ge 2$.  
One may easily show that there does not exist a ternary  $\mathcal T(n)$-universal quadratic form for any positive integer $n$.  Conway and Schneeberger proved that there are exactly $204$ quaternary universal quadratic forms up to isometry (for this, see \cite {b}).  Halmos proved  in \cite{hal}  that the quaternary quadratic form $2x^2+2y^2+3z^2+4t^2$ represents all integers greater than $1$. Hence it is a tight $\mathcal T(2)$-universal quadratic form, and therefore we have $t(2)=4$. Barowsky and his/her collaborators proved  in \cite{bar} that the quaternary quadratic form 
$$
3x^2-2xy-2xz+2xt+3y^2-2yt+4z^2-4zt+5t^2
$$
 represents all integers except for $1$ and $2$. Hence it is tight $\mathcal T(3)$-universal and therefore $t(3)=4$.  As far as the authors know, there is no known tight $\mathcal T(n)$-universal quadratic form with rank $t(n)$ for any integer $n\ge 4$. 

In this article, we find all tight $\mathcal T(n)$-universal quadratic forms which are ``diagonal".  This will be done in Section 2. In Section 3, We  provide an explicit upper bound for $t(n)$ for any integer $n \le 36$. In Section 4, we prove that  $t(n) \in \Omega(\log_2(n)) \cap O(\sqrt{n})$. In particular, the minimum rank of tight $\mathcal T(n)$-universal quadratic forms goes to infinity as $n$ increases.  Recall that for two arithmetic functions $f(n)$ and $g(n)$, we say $f(n) \in \Omega(g(n))$ if and only if there is a constant $C>0$ such that $C\cdot g(n) \le f(n)$ for any sufficiently large integer $n$.  

The subsequent  discussion will be conducted in the language of quadratic spaces and lattices.  The readers are referred to \cite{ki} and \cite{om} for any unexplained notations and terminologies.    For simplicity, the quadratic map and its associated bilinear form on any quadratic space will be denoted by $Q$ and $B$, respectively.  The term {\em lattice} always means a finitely generated $\z$-module on a finite dimensional positive definite quadratic space over $\q$.  The scale of a lattice $L$, denoted $\mathfrak s(L)$, is the ideal generated by $\{B(\bx, \by): \bx, \by \in L\}$ in $\z$.  We call $L$ an integral lattice if $\fs(L) \subseteq \z$.  Throughout this article, we always assume that a $\z$-lattice is {\it positive definite and integral}.  

Let $L=\z \bx_1+\z \bx_2+\dots+\z \bx_n$ be a $\z$-lattice of rank $n$. Note that the quadratic form $f_L$ corresponding to $L$ is defined by $f_L(x_1,x_2,\dots,x_n)=\sum_{i,j=1}^n B(\bx_i,\bx_j) x_ix_j$. The symmetric matrix $(B(\bx_i,\bx_j))$ is called the Gram matrix of the lattice $L$. 
 Let $p$ be a prime and let $\z_p$ be the $p$-adic integer ring. We define $L_p=L\otimes \z_p$, which is considered as  a $\z_p$-lattice. 

 A $\z$-lattice $M$ is said to be represented by  $L$ if there is a linear map $\sigma: M \longrightarrow L$ such that $Q(\sigma(\bx)) = Q(\bx)$ for any $\bx \in M$.  Such a map is called a representation from $M$ into $L$, which is necessarily injective because the bilinear map defined on $M$ is assumed to be nondegenerate.  If there is a linear map $\sigma_p : M_p \longrightarrow L_p$ satisfying the above property for some prime $p$, then we say $M$ is represented by $L$ over $\z_p$. We say $M$ is locally represented by $L$ if $M$ is represented by $L$ over $\z_p$ for any prime $p$.   If $M$ is represented by $L$, then we simply write $M \lra L$. In particular, if $M=\z \bx$ with $Q(\bx)=m$ is a unary $\z$-lattice, then we simply write $m \lra L$ as well as $M \lra L$.  We also define 
 $$
Q(L)=\{ a \in \z^{+} : a \lra L\}.
$$ 
 We say $M$ is isometric to $L$ over $\z$  if there exists a representation sending $L$ onto $M$.  In this case we will write $L \cong M$.  If $M$ is isometric to $L$ over $\z_p$ for any prime $p$, then we say $M$ is contained in the genus of $L$, and we write $M \in \gen(L)$.  The number of isometry classes in the  genus of $L$ is called the class number of $L$, and is denoted by $h(L)$. It is well known that the class number of any $\z$-lattice is always finite. It is also well known that a $\z$-lattice $K$ is locally represented by $L$ if and only if there is a $\z$-lattice $M \in \gen(L)$ such that $K \lra M$.   The set of all integers that are locally represented by $L$ is denoted by $Q(\gen(L))$. 
 
If $L$ is a $\z$-lattice and $A$ is one of its Gram matrix, we will write $L \cong A$.   We will often address a positive definite symmetric matrix as a lattice.  The diagonal matrix with entries $a_1, \ldots, a_n$ on its main diagonal will be denoted by $\langle a_1, \ldots, a_n\rangle$.  
 
For two vectors $(u_1,u_2,\dots,u_r)\in \z^r$ and $(v_1,v_2,\dots,v_s)\in \z^s$, we write
$$
(u_1,u_2,\dots,u_r)\preceq (v_1,v_2,\dots,v_s)
$$
if $\{u_i\}_{1\le i\le r}$ is a subsequence of $\{v_j\}_{1\le j\le s}$
and we write
$$
(u_1,u_2,\dots,u_r)\prec (v_1,v_2,\dots,v_s)
$$
if $\{u_i\}_{1\le i\le r}$ is a proper subsequence of $\{v_j\}_{1\le j\le s}$.
For two diagonal $\z$-lattices $\ell=\langle u_1,u_2,\dots,u_r \rangle$ and $L=\langle v_1,v_2,\dots,v_s \rangle$, we define $\ell \preceq L$    ($\ell \prec L$) if and only if 
$(u_1,u_2,\dots,u_r)\preceq  (v_1,v_2,\dots,v_s)$ ($(u_1,u_2,\dots,u_r)\prec  (v_1,v_2,\dots,v_s)$, respectively).

\section{Diagonal tight $\mathcal T(n)$-universal $\z$-lattices}   

In this section, we find all diagonal tight $\mathcal T(n)$-universal $\z$-lattices for any positive integer $n$.   Let $L=\langle a_1,a_2,\dots,a_k\rangle$ be a diagonal tight $\mathcal T(n)$-universal $\z$-lattice.
We say that $L$ is \textit{new} if $\langle a_{j_1},a_{j_2},\dots,a_{j_l}\rangle$ is not $\mathcal T(n)$-universal whenever $\langle a_{j_1},a_{j_2},\dots,a_{j_l}\rangle \prec \langle a_1,a_2,\dots,a_k\rangle$.  From the definition,  to find all diagonal tight $\mathcal T(n)$-universal $\z$-lattices, it suffices to find all ``new''    diagonal tight $\mathcal T(n)$-universal $\z$-lattices.  Ramanujan proved in \cite{r} that there are exactly $54$ diagonal tight $\mathcal T(1)$-universal quaternary $\z$-lattices, which are all new (see also \cite{dic}). One may easily show by using, so called,  the $15$-theorem (see \cite{b}) that all new diagonal tight $\mathcal T(1)$-universal $\z$-lattices with rank greater than $4$ are
$$
\langle 1,2,5,5,a\rangle, \quad \text{where $a=5,11,12,13,14$, and $15$.}
$$   

From now on, we always assume that $n\ge2$.

\begin{lem} \label{lemdiag1}
Let $L=\langle a_1,a_2,\dots,a_k\rangle$ be a diagonal $\z$-lattice  with $2\le a_1\le a_2\le \cdots \le a_k$.
Assume that there are positive integers $m$ and $i$ such that the following conditions hold;
\begin{enumerate}
\item $m\lra L$;
\item $m\nlra \langle a_1,\dots,a_i\rangle$;
\item $m<a_i+a_1$.
\end{enumerate}
Then there is an integer $j$ with $i+1\le j\le k$ such that $a_j=m$.
\end{lem}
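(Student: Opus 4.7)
The plan is to exploit condition (1) to produce an explicit representation $m = \sum_{l=1}^k a_l x_l^2$ with $x_l \in \z$, and then use the tightness imposed by conditions (2) and (3) to force almost all $x_l$ to vanish, leaving a single term $a_j$ with $|x_j|=1$.

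First I would use condition (2) to observe that, in any such representation, at least one index $j \ge i+1$ must have $x_j \neq 0$; otherwise $m$ would be represented by $\langle a_1,\dots,a_i\rangle$, contradicting (2). Fix such a $j$. From the inequality
\[
a_j x_j^2 \ \le\ m \ <\ a_i + a_1 \ \le\ a_j + a_j,
\]
where the last step uses $a_j \ge a_{i+1} \ge a_i \ge a_1$, I get $x_j^2 < 2$, hence $|x_j|=1$. In particular $a_j \le m$.

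Next I would show that there is exactly one such index $j \ge i+1$ with $x_j \neq 0$. If two distinct such indices $j_1 < j_2$ both satisfied $x_{j_1}, x_{j_2} \neq 0$, then
\[
m \ \ge\ a_{j_1} x_{j_1}^2 + a_{j_2} x_{j_2}^2 \ \ge\ a_{j_1} + a_{j_2} \ \ge\ 2a_{i+1} \ \ge\ a_i + a_1,
\]
again using $a_i \ge a_1$ and the ordering of the $a_l$'s. This contradicts condition (3). So there is a unique such $j$, and $|x_j|=1$.

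Finally I would argue that every $x_l$ with $l \le i$ must vanish. If some $x_{l_0} \neq 0$ with $l_0 \le i$, then the representation yields
\[
m \ =\ a_j + \sum_{l=1}^{i} a_l x_l^2 \ \ge\ a_j + a_{l_0} x_{l_0}^2 \ \ge\ a_j + a_1 \ \ge\ a_i + a_1,
\]
once more contradicting (3). Hence $m = a_j$, as desired. The argument is purely a bookkeeping exercise with the inequalities; the only mildly delicate point, and the one I would be most careful about, is making sure that the chain $a_j \ge a_{i+1} \ge a_i \ge a_1$ is available in each of the three estimates above, since the ordering $a_1 \le \cdots \le a_k$ is what makes all the comparisons with $a_i + a_1$ work.
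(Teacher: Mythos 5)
Your proof is correct and follows essentially the same route as the paper: extract a representation from condition (1), use condition (2) to find a nonzero coordinate $x_j$ with $j\ge i+1$, and then use the bound $m<a_i+a_1\le a_j+a_1$ from condition (3) together with the ordering of the $a_l$'s to force $x_j=\pm 1$ and all other coordinates to vanish. Your write-up merely expands into three explicit sub-steps what the paper compresses into one line, so there is nothing to add.
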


\begin{proof}
Note that there is a vector $(x_1,x_2,\dots,x_k)\in \z^k$ such that
$$
a_1x_1^2+a_2x_2^2+\cdots+a_kx_k^2=m
$$
by condition (1). Furthermore, by condition (2),  there is an index $j$ with $i+1\le j\le k$ such that $x_j\neq 0$.
Now, by condition (3), we have
$$
m<a_i+a_1\le a_j+a_1,
$$
and thus $x_j=\pm 1$ and $x_s=0$ for any $s\ne j$. 
This completes the proof.
\end{proof}

\begin{lem} \label{lemdiag2}  
Let $L=\langle a_1,a_2,\dots,a_k\rangle$ be a diagonal $\mathcal T(a_1)$-universal $\z$-lattice with $2 \le a_1\le a_2\le \dots \le a_k$.
Let $r$ and $s$ be positive integers with $r<s$ such that $r\le a_1-1$ and $s\le k-1$.
Assume further  that there are integers $a_s<m_1<m_2<\dots <m_r<a_s+a_1$ and $1\le l_1<l_2<\cdots<l_{s-r}\le s$ such that the following conditions hold;
\begin{enumerate}
\item $m_i\nlra \langle a_1,a_2,\dots,a_s\rangle$ for any $i=1,2,\dots,r$;
\item $\langle a_{l_1},a_{l_2},\dots,a_{l_{s-r}},m_1,m_2,\dots, m_r\rangle$ is $\mathcal T(a_1)$-universal.
\end{enumerate}
Then $L$ is not new.
\end{lem}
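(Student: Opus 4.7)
The plan is to use Lemma~\ref{lemdiag1} to force each $m_i$ to appear among the entries $a_{s+1},\ldots,a_k$, and then to repackage these entries together with the $a_{l_j}$'s into a proper sub-diagonal of $L$ that coincides with the $\mathcal{T}(a_1)$-universal lattice furnished by hypothesis~(2).

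First, I would fix $i\in\{1,2,\dots,r\}$ and verify the three hypotheses of Lemma~\ref{lemdiag1} with the role of ``$i$'' played by $s$ and the role of ``$m$'' played by $m_i$. Condition (1) there holds because $m_i > a_s \ge a_1$, so $m_i\in\mathcal{T}(a_1)\subseteq Q(L)$ by the $\mathcal{T}(a_1)$-universality of $L$. Condition (2) is exactly hypothesis~(1) of the present lemma. Condition (3) reads $m_i < a_s + a_1$, which is part of the assumed chain $a_s < m_1 < \cdots < m_r < a_s + a_1$. Hence Lemma~\ref{lemdiag1} produces an index $j_i$ with $s+1 \le j_i \le k$ and $a_{j_i}=m_i$. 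Because the $m_i$ are pairwise distinct, so are the indices $j_1,\ldots,j_r$.

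Next, I would merge the two disjoint collections of indices $\{l_1,\dots,l_{s-r}\}\subseteq\{1,\dots,s\}$ and $\{j_1,\dots,j_r\}\subseteq\{s+1,\dots,k\}$ into a single set of $s$ distinct indices and reorder them increasingly as $t_1<t_2<\cdots<t_s$. This yields $(a_{t_1},a_{t_2},\dots,a_{t_s})\preceq(a_1,a_2,\dots,a_k)$, and the inequality is strict because $s\le k-1$. Since a diagonal $\z$-lattice is invariant under permutation of its entries, the sub-diagonal $\langle a_{t_1},\dots,a_{t_s}\rangle$ is isometric to $\langle a_{l_1},\dots,a_{l_{s-r}},m_1,\dots,m_r\rangle$, which is $\mathcal{T}(a_1)$-universal by hypothesis~(2). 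Therefore $L$ admits a proper diagonal subsequence that is $\mathcal{T}(a_1)$-universal, so $L$ is not new.

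The proof is largely a careful bookkeeping argument; the only nontrivial step is invoking Lemma~\ref{lemdiag1}, and the main thing to check is that each $m_i$ really lies in $Q(L)$ and falls strictly below the threshold $a_s+a_1$. Once this is in hand, the rest is just recognizing that replacing each $m_i$ by $a_{j_i}$ inside the lattice of hypothesis~(2) produces the desired proper sub-diagonal of $L$.
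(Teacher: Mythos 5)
Your proof is correct and follows the same route as the paper: apply Lemma~\ref{lemdiag1} with $i=s$ and $m=m_\nu$ to locate each $m_\nu$ among $a_{s+1},\dots,a_k$, then observe that these entries together with $a_{l_1},\dots,a_{l_{s-r}}$ form a proper $\mathcal T(a_1)$-universal sub-diagonal of $L$, so $L$ is not new. Your write-up simply makes explicit the verification of the three hypotheses of Lemma~\ref{lemdiag1} and the bookkeeping of indices that the paper leaves implicit.
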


\begin{proof}
For each $\nu =1,2,\dots,r$, there is an integer $j_{\nu}$ with $s+1\le j_{\nu}\le k$ such that $a_{j_{\nu}}=m_{\nu}$ by Lemma \ref{lemdiag1}. Thus it follows from the condition (2) that $L$ is not new.
\end{proof}

For any integer $n$ with $n\ge 2$, we define two diagonal $\z$-lattices
$$
X_n:=\langle n,n+1,n+2,\dots,2n\rangle \quad \text{and} \quad 
Y_n:=\langle n,n,n+1,n+2,\dots,2n-1\rangle.
$$
\begin{prop} \label{propxy}
Let $n$ be an integer greater than $1$ and let $L$ be a diagonal tight $\mathcal T(n)$-universal $\z$-lattice.
Then we have $X_n\preceq L$ or $Y_n\preceq L$.
\end{prop}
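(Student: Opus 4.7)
The plan is to pin down which values must appear among the diagonal entries $a_1,\ldots,a_k$ by examining how $L$ must represent each integer in $\{n,n+1,\ldots,2n\}$, exploiting throughout the constraint that every $a_i\ge n$ (which in turn follows from the tightness assumption together with the fact that each $a_i$ itself lies in $Q(L)$).

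First I would establish that $a_1=n$. Tightness gives $a_1\ge n$ immediately; conversely, any representation $n=\sum a_ix_i^2$ must have exactly one nonzero term (otherwise the sum is at least $2n$), and that single term forces $a_i=n$ with $x_i=\pm1$. Since $a_1$ is the smallest entry, $a_1=n$.

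Next, for each $m$ with $n+1\le m\le 2n-1$, I would apply Lemma \ref{lemdiag1} with $i=1$: condition (1) holds because $m\in Q(L)$; condition (2) holds because $\langle n\rangle$ represents only $\{nk^2:k\ge 1\}$, and $m\in(n,2n)$ is not of this form; condition (3) reads $m<2n=a_1+a_1$. The lemma therefore yields an index $j\ge 2$ with $a_j=m$. So every value in $\{n+1,\ldots,2n-1\}$ occurs among the diagonal entries of $L$.

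Finally I would analyze the representation of $2n$, which is the branching step. Writing $2n=\sum a_ix_i^2$ with support $S=\{i:x_i\ne 0\}$, the bound $a_i\ge n$ forces $|S|\le 2$. If $|S|=1$ then $a_jx_j^2=2n$ with $a_j\ge n$, which forces $x_j^2=1$ and $a_j=2n$; combined with the previous step, all of $n,n+1,\ldots,2n$ occur among the $a_i$, so $X_n\preceq L$. If $|S|=2$, then each of the two nonzero terms must equal $n$ exactly, producing two distinct indices with $a_i=n$; together with the entries $n+1,\ldots,2n-1$ from the second step, this gives $Y_n\preceq L$. The whole argument is a short counting argument driven by $a_i\ge n$ and the smallness of the integers being represented, so I do not expect a substantive obstacle; the only subtle point is that $2n$ is exactly the boundary case where the lower bound $a_i\ge n$ admits two essentially different representation patterns, and it is precisely this dichotomy that produces the two lattices $X_n$ and $Y_n$ in the conclusion.
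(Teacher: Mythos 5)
Your proof is correct and follows essentially the same route as the paper's: show $a_1=n$, invoke Lemma \ref{lemdiag1} to force each of $n+1,\dots,2n-1$ to appear as a diagonal entry, and then analyze the representation of $2n$ to obtain the dichotomy between $X_n$ and $Y_n$. The only (harmless, arguably cleaner) variation is that you apply the lemma with $i=1$ for every $m$, whereas the paper uses $i=\nu$ with $m=n+\nu$ incrementally; you also write out explicitly the final case analysis for $2n$ that the paper leaves to the reader.
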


\begin{proof} Assume that $L=\langle a_1,a_2,\dots,a_k\rangle$, where $a_1\le a_2 \le\dots\le a_k$. 
Clearly $a_1=n$. Now, we apply Lemma \ref{lemdiag1}  on the case when $i=\nu$ and $m=n+\nu$ for each $\nu=1,2,\dots,n-1$ to prove that
$$
\langle n,n+1,n+2,\dots,2n-1\rangle \prec L.
$$
Since $2n\lra L$ and  $2n\nlra \langle n,n+1,\dots,2n-1\rangle$, one may easily show that
$$
X_n=\langle n,n+1,n+2,\dots,2n-1,2n\rangle \preceq L\ \ \text{or}\ \ Y_n=\langle n,n,n+1,n+2,\dots,2n-1\rangle \preceq L.
$$
This completes the proof.
\end{proof}

\begin{prop} \label{propdiag1}
For any positive integer $n$ greater than $3$,
the diagonal $\z$-lattice $X_n=\langle n, n+1, \dots, 2n \rangle$ is tight $\mathcal T(n)$-universal. In particular, we have $t(n) \le n+1$ for any $n\ge 4$.
\end{prop}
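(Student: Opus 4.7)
The statement splits into two assertions: tightness (no positive integer less than $n$ is represented) and universality (every integer $\ge n$ is represented). Tightness is immediate, since every diagonal entry of $X_n=\langle n,n+1,\dots,2n\rangle$ is at least $n$, so the minimum of $X_n$ on nonzero integer vectors is $n$ and hence $Q(X_n)\cap[1,n-1]=\emptyset$.

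For universality, my plan is to construct explicit representations case by case. If $m\in[n,2n]$, write $m=n+j$ with $0\le j\le n$ and take $x_j=1$ while setting all other coordinates to zero. For $m>2n$, I would look for representations of the form
\[
m \;=\; nq^2\;+\;\sum_{j\in J}(n+j)\,x_j^2
\]
with $q\in\mathbb Z_{\ge 0}$, $J\subseteq\{1,\dots,n\}$, and $x_j\in\mathbb Z\setminus\{0\}$ for $j\in J$. In the simplest situation each $x_j\in\{\pm 1\}$, and the identity simplifies to $m=n(q^2+|J|)+\sigma$ with $\sigma=\sum_{j\in J}j$. Since $\sigma$ attains every integer in $[k(k+1)/2,\,k(2n-k+1)/2]$ as $J$ ranges over the size-$k$ subsets of $\{1,\dots,n\}$, the set of reachable $m$'s is a union of arithmetic-interval-like sets indexed by the pair $(q,k)$.

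The main work is then a covering argument showing that this union, possibly augmented by decompositions allowing $|x_j|\ge 2$ (such as a single term $(n+j)\cdot 4$), exhausts $\mathcal T(n)\cap(2n,\infty)$. Concretely, for $m>2n$ I would take the largest $q$ with $nq^2\le m$ and analyze the residue $r=m-nq^2\in[0,n(2q+1))$; when $r\in\{0\}\cup[n+1,2n]\cup[2n+3,4n-1]\cup[3n+6,6n-3]\cup\cdots$ the $\pm 1$-only decompositions succeed, while the small residual cases $r\in\{1,\dots,n\}$ or $r\in\{2n+1,2n+2\}$ are handled either by decreasing $q$ by one (so that the new residue falls into a covered range) or by introducing a term $(n+j)\cdot 4$ to shift the decomposition. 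The assumption $n\ge 4$, which gives $X_n$ at least five coordinates, provides the flexibility needed for this enumeration; I expect the most delicate step to be the verification that every residue class and every small value of $q$ is indeed covered, with no overlooked gap. Once universality is established, the upper bound $t(n)\le n+1$ follows at once from the rank of $X_n$.
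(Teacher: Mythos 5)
Your tightness argument is fine, and so is the treatment of $m\in[n,2n]$. The difficulty is universality for large $m$, where your proposal is only a plan and, more importantly, a plan that cannot be completed with the tools you describe. Your covering set consists of integers of the form $nq^2+\sigma$, where $\sigma$ is a $\pm1$-subset sum of $\{n+1,\dots,2n\}$, occasionally augmented by one further term $4(n+j)$. The subset sums form a fixed finite set contained in $[0,\tfrac{3n(n+1)}{2}]$, so for fixed $n$ every integer you can reach lies in $B+S$, where $B=\{nq^2+(n+j)p^2: 0\le j\le n,\ q,p\in\mathbb Z\}$ is the union of the value sets of finitely many positive definite binary quadratic forms and $S$ is finite. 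By the Landau--Bernays theorem such a union has density zero, hence so does $B+S$, and infinitely many $m$ are missed for every fixed $n$. Concretely, once $m$ is of order $n^3$, the residue $m-nq^2$ for the largest admissible $q$ can exceed $\tfrac{3n(n+1)}{2}$, while decreasing $q$ by one shifts the residue by $n(2q-1)\approx 2\sqrt{nm}$, which eventually overshoots the entire covered range; no amount of care with the residue classes repairs this. To reach all large $m$ one must place coefficients of absolute value at least $2$ on three or more coordinates, and at that point one is facing a genuine ternary or quaternary representation problem that interval bookkeeping cannot settle.

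This is exactly the point the paper's proof is built around, and it takes a genuinely different route. For $n\ge7$ it pairs coordinates via $(n+n_i)+(2n-n_i)=3n$, so that $\langle n,2n,3n,3n\rangle$, which is $\langle 1,2,3,3\rangle$ scaled by $n$, is represented by six coordinates of $X_n$ chosen to avoid the coordinate $a$ reserved for the residue class of $m$ modulo $n$; since $\langle 1,2,3,3\rangle$ is universal, all nonnegative multiples of $n$ are represented, and the extra coordinate $a$ then gives every $nu+a$. For $n=4,5,6$, where not enough disjoint pairs are available, the paper instead uses the known exceptional sets of the ternary forms $\langle 1,2,3\rangle$ and $\langle 1,2,6\rangle$ (and the universality of certain quaternary forms from the $15$-theorem) together with finite checks. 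Some arithmetic input of this kind -- the three-square theorem, the $15$-theorem, or a class-number computation -- is unavoidable; your argument would need to import one of these to close the gap, and would additionally need separate care for $n=4,5,6$, where even the paper's generic construction fails.
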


\begin{proof}
Note that any positive integer $k$ greater than or equal to $n$ is of the form $nu+a$ for some nonnegative  integer $u$ and some integer $a$ with  $n \le a \le 2n-1$.

First, assume that $n$ is greater than or equal to 7.
Let $a$ be any integer with $n \le a \le 2n-1$.
One may choose two integers $n_{1}$ and $n_{2}$ with $0<n_{1}<n_{2} \le \left[ \frac{n-1}{2}\right]$ such that  $\{n_1,n_2\} \cap  \{a-n, 2n-a\}=\emptyset$.
Since $\langle 1,2,3,3 \rangle$ is universal, the diagonal $\z$-lattice 
$$
\langle n,2n,n+n_{1}, 2n-n_{1},n+n_{2},2n-n_{2} \rangle
$$
 represents all nonnegative integers which are multiples of $n$. Furthermore,  the diagonal $\z$-lattice $\langle n,2n,n+n_{1},2n-n_{1},n+n_{2},2n-n_{2},a\rangle$ represents all positive integers of the form $nu+a$, where $n+1 \le a\le 2n-1$. Therefore $\langle n, n+1, \dots, 2n \rangle$ represents all integers greater than or equal to $n$.

Secondly, assume that $n=6$. Note that by the $15$-Theorem, the quaternary $\z$-lattices $\langle 1,2,3,3\rangle$ and $\langle 1,2,3,4\rangle$ are universal.  
Since $\langle 6, 12, 18, 36 \rangle$ is a $\z$-sublattice of $\langle 6,6+n',9,12-n',12 \rangle$ for any $n'=1,2$, the diagonal $\z$-lattice $\langle 6,7,8,9,10,11,12 \rangle$ represents all integers of the form $6u+a$ with $a=7,8,10,11$.
On the other hand, $\langle 6,12,18,18 \rangle$ is a $\z$-sublattice of $\langle 6,7,8,10,11,12 \rangle$.
Therefore $\langle 6,7,8,9,10,11,12 \rangle$ represents all integers of the form $6u+6$ or $6u+9$ for any integer $u$ with $u\ge 0$.

Now, we consider the case when $n=5$. Since all the other cases can be proved in a similar manner, we only prove that any integer of the form $5k+6$ for some nonnegative integer $k$ is represented by $\langle 5,6,7,8,9,10 \rangle$.  
To prove this, we use the well-known fact that $\langle 1,2,3 \rangle$ represents all nonnegative integers except for integers of the form $4^{r}(16s+10)$ with some nonnegative integers $r$ and $s$.
Note that $\langle 5,10,15 \rangle$ is a $\z$-sublattice of  $\langle 5,7,8,10 \rangle$.
For any integer $k$ with $k \ge 15$,
since either $k$ or $k-15$ is not of the form $4^{r}(16s+10)$,  $5k+6=5(k-15)+9 \cdot 3^{2}$ is represented by $\langle 5,6,7,8,9,10 \rangle$.
One may directly check that $\langle 5,6,7,8,9,10 \rangle$ also represents all integers of the form $5k+6$ for any integer $k$ with  $0 \le k \le 14$. 

Finally, assume that $n=4$.
Note that $\langle 1,2,5,6 \rangle$ and $\langle 1,2,5,7 \rangle$ are, in fact, universal.
Since $\langle 4,8,20,24 \rangle$ is a $\z$-sublattice of $\langle 4,5,6,8 \rangle$ and
$\langle 4,8,20,28 \rangle$ is a $\z$-sublattice of $\langle 4,5,7,8 \rangle$, the $\z$-lattice
$\langle 4,5,6,7,8 \rangle$ represents all integers of the form $4u+4$, $4u+6$ and $4u+7$. Hence it suffices to show that any integer of the form $4u+5$ is represented by
 $\langle 4,5,6,7,8 \rangle$.  To prove this, we use the fact that that $\langle 1,2,6 \rangle$ represents all nonnegative integers except for integers of the form $4^{r}(8s+5)$ with some nonnegative integers $r$ and $s$. Since $\langle 4,8,24 \rangle$ is a sublattice of $\langle 4,6,8\rangle$, the ternary $\z$-lattice $\langle 4,6,8\rangle$ represents all integers of the form $4u$, where $u$ is an integer which is represented by $\langle 1,2,6\rangle$.  
For any integer $k$ with $k \ge 10$,   since at least one of  $k$, $k-7$, or $k-10$ is not of the form $4^{r}(8s+5)$, the integer 
$$
4k+5=4(k-7)+7 \cdot 2^{2}+5=4(k-10)+5 \cdot 3^{2}
$$
is represented by $\langle 4,5,6,7,8 \rangle$. Finally, one may directly check that $\langle 4,5,6,7,8 \rangle$ also represents all integers of the form $4k+5$ for any integer $k$ with $0 \le k \le 9$. This completes the proof.  
\end{proof}

\begin{rmk}
{\rm The ternary diagonal $\z$-lattice  $\langle 2,3,4\rangle$ does not represent $10$, whereas the quaternary diagonal $\z$-lattice $\langle 2,3,4,5\rangle$ is, in fact, tight $\mathcal T(2)$-universal. The quaternary diagonal $\z$-lattice $\langle 3,4,5,6\rangle$ does not represent $35$, whereas  $\langle 3,4,5,6,7 \rangle$  is tight $\mathcal T(3)$-universal. These will be proved in Proposition \ref{propg3} and Proposition \ref{propg2}, respectively.}
\end{rmk}
\begin{prop} \label{propdiag2}
For any positive integer $n$ greater than 3, the diagonal $\z$-lattice $Y_n=\langle n,n,n+1,n+2,\dots,2n-1\rangle$ is tight $\mathcal T(n)$-universal.
\end{prop}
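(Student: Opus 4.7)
The plan is to parallel the proof of Proposition~\ref{propdiag1}: every integer at least $n$ takes the form $nu+a$ with $u\ge 0$ and $a\in\{n,n+1,\dots,2n-1\}$, so it suffices to show that $Y_n$ represents each such $nu+a$.

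For $n\ge 7$, the key input is that $\langle 1,1,3,3\rangle$ is universal (immediate from the $15$-theorem). For each $a\in\{n+1,\dots,2n-1\}$, I choose integers $0<n_1<n_2\le\lfloor(n-1)/2\rfloor$ with $\{n_1,n_2\}\cap\{a-n,2n-a\}=\emptyset$; the inequality $2\lfloor(n-1)/2\rfloor<n$ forces $\{a-n,2n-a\}$ to meet $\{1,\dots,\lfloor(n-1)/2\rfloor\}$ in at most one element, while $\lfloor(n-1)/2\rfloor\ge 3$ for $n\ge 7$ leaves at least two admissible values. Then $\langle n,n,n+n_1,2n-n_1,n+n_2,2n-n_2,a\rangle\preceq Y_n$, and it contains $\langle n,n,3n,3n,a\rangle$ as a $\z$-sublattice via basis vectors of norms $n$, $n$, $(n+n_1)+(2n-n_1)=3n$, $(n+n_2)+(2n-n_2)=3n$, and $a$. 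By the universality of $\langle 1,1,3,3\rangle$, the sublattice $\langle n,n,3n,3n\rangle$ represents every non-negative multiple of $n$, so $\langle n,n,3n,3n,a\rangle$ represents every $a+nu$ with $u\ge 0$. The residue $a=n$ is handled by the same construction without the trailing $a$.

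For $n=6$, $7+11=8+10=18=3n$ exhibits $\langle 6,6,18,18\rangle$ as a $\z$-sublattice of $\langle 6,6,7,8,10,11\rangle\preceq Y_6$; by universality of $\langle 1,1,3,3\rangle$ this represents every non-negative multiple of $6$, covering $a\in\{6,7,8,10,11\}$, and $a=9$ is handled because the norm-$9$ entry of $Y_6$ is unused in the above sublattice. For $n=4$ the analogous input is that $\langle 1,1,3,6\rangle$ is universal (again by the $15$-theorem), giving $\langle 4,4,12,24\rangle$ as a $\z$-sublattice of $Y_4$ via basis vectors of norms $4$, $4$, $5+7=12$, and $2^2\cdot 6=24$, hence all multiples of $4$; the residues $a\in\{5,6,7\}$ are then obtained by analogous sublattice constructions together with finite checks on small $u$. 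For $n=5$, the identities $6+9=7+8=15$ realize $\langle 5,5,15,15\rangle$ as a $\z$-sublattice of $Y_5$, which represents all multiples of $5$ (covering $a=5$); for each $a\in\{6,7,8,9\}$ I fix the norm-$a$ entry and represent $5u$ from the remaining five summands, invoking the classical fact that $x^2+y^2+3z^2$ misses only integers of the form $9^t(9u+6)$ and verifying the finitely many exceptional $u$ directly (for instance, for $a=6$ the troublesome case $u=6$ gives $5u+6=36=9\cdot 2^2$, handled by the norm-$9$ entry alone).

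The principal obstacle will be the small-$n$ analysis for $n\in\{4,5\}$. In Proposition~\ref{propdiag1}, the element $2n$ in $X_n$ paired immediately with $n$ under $\langle 1,2,3,3\rangle$ to produce multiples of $n$; for $Y_n$ one must instead use the two copies of $n$ together with $\langle 1,1,3,3\rangle$ or $\langle 1,1,3,6\rangle$, and the more restricted off-residue set forces several ad hoc sublattice choices plus direct verification of a finite exceptional set for each residue class $a$.
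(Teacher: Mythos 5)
Your treatment of $n\ge 7$ is correct and is exactly the ``similar manner'' the paper has in mind: replace the paper's $\langle n,2n,n+n_1,2n-n_1,n+n_2,2n-n_2\rangle\supseteq n\langle 1,2,3,3\rangle$ by $\langle n,n,n+n_1,2n-n_1,n+n_2,2n-n_2\rangle\supseteq n\langle 1,1,3,3\rangle$, with the same counting argument guaranteeing two admissible $n_1<n_2$. The paper states the universality of $\langle 1,1,3,3\rangle$ and then leaves all of $n\ge 5$ as an exercise, writing out only $n=4$, so for large $n$ you have supplied the intended proof.

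The small cases, however, contain genuine gaps. For $n=6$, the sublattice $\langle 6,6,18,18\rangle\subseteq\langle 6,6,7,8,10,11\rangle$ represents only the multiples of $6$, so it covers $a=6$ (and $a=9$ via the unused entry), not $a\in\{7,8,10,11\}$: the entries $7,8,10,11$ are consumed in forming the two $18$'s and cannot simultaneously serve as the residue. You need, for each such $a$, a copy of $6\langle 1,1,3,3\rangle$ or $6\langle 1,1,3,6\rangle$ inside the orthogonal complement of the norm-$a$ basis vector (e.g.\ $\langle 6,6,18,36\rangle\subseteq\langle 6,6,8,9,10\rangle$ for $a\in\{7,11\}$ and $\subseteq\langle 6,6,7,9,11\rangle$ for $a\in\{8,10\}$). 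For $n=5$, the exceptional set of $\langle 1,1,3\rangle$ is $\{9^t(9v+6)\}$, which is \emph{infinite}, so ``verifying the finitely many exceptional $u$ directly'' does not close the argument; moreover the paper's either/or shift trick is delicate here because this exceptional set is not disturbed by shifting $k$ by a multiple of $3$ (for instance $k=69$ and $k-15=54$ are both exceptional), so one needs either two independent shifts or a bounded direct check plus a shift that provably escapes the exceptional set. For $n=4$ you give essentially no argument for $a\in\{5,6,7\}$, and the naive reduction fails: fixing the norm-$5$ vector forces $4u$ to come from $\langle 4,4,6,7\rangle$, which represents $4u$ only when $u\lra\langle 1,1,6,7\rangle$, a non-universal form (it misses $3$). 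The paper's actual proof of the $n=4$ case is structurally different and more robust: a direct check up to $112$, followed by a genus argument with the class-number-one ternary lattice $\langle 4,4,6\rangle$ and a two-parameter subtraction $u-5d_1^2-7d_2^2$. As written, your proof establishes the proposition only for $n\ge 7$.
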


\begin{proof}
Note that the quaternary $\z$-lattice  $\langle 1,1,3,3\rangle$ is universal and the ternary $\z$-lattice $\langle 1,1,3\rangle$ represents all positive integers except for those integers which are  of the form $3^{2a+1}(3b+2)$ for some nonnegative integers $a$ and $b$.
If $n\ge 5$,  the proof of the tight $\mathcal T(n)$-universality of $Y_n$  is quite similar to that of Proposition \ref{propdiag1}. So, the proof is left as an exercise to the reader. 
Now, we show that $Y_4=\langle 4,4,5,6,7\rangle$ is tight $\mathcal T(4)$-universal.
Let $u$ be an integer greater than or equal to $4$. 
One may directly check that $u\lra Y_4$  for any integer $u$  less than or equal to $112$.
Hence we always assume that $u\ge 113$. 
Let $K=\langle 4,4,6\rangle$.  Since $h(K)=1$,  one may easily check by using \cite {om2} that every integer in the set
$$
A=\{ u\in \n : u\equiv 0\Mod 2,\ u\not\equiv 2\Mod {16},\ u\not\equiv 0\Mod 3\}
$$
is represented by $K$. One may easily check that there is a pair 
$$
(d_1,d_2) \in \{ (0,1),(0,2),(0,3),(0,4),(1,0),(2,0),(3,0),(4,0)\}
$$
such that $u-5d_1^2-7d_2^2\in A$.
It follows that $u-5d_1^2-7d_2^2\lra K$ and thus we have $u\lra Y_4$.  This completes the proof.
\end{proof}
Now, by Propositions \ref{propxy}, \ref{propdiag1}, and \ref{propdiag2}, we have
\begin{thm} \label{thmge4}
For any positive integer $n$ greater than 3, there are exactly two new diagonal tight $\mathcal T(n)$-universal $\z$-lattices, which are, in fact, $X_n$ and $Y_n$.
\end{thm}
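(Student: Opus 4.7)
The plan is to assemble Theorem \ref{thmge4} from the three preceding propositions together with a short direct verification that $X_n$ and $Y_n$ are themselves new. The only nontrivial contents are already packaged: Propositions \ref{propdiag1} and \ref{propdiag2} give the tight $\mathcal T(n)$-universality of $X_n$ and $Y_n$, and Proposition \ref{propxy} forces every diagonal tight $\mathcal T(n)$-universal $\z$-lattice to contain $X_n$ or $Y_n$ as a diagonal sublattice in the $\preceq$-sense.

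For uniqueness, suppose $L=\langle a_1,a_2,\dots,a_k\rangle$ (with $a_1\le a_2\le \cdots\le a_k$) is a new diagonal tight $\mathcal T(n)$-universal $\z$-lattice. By Proposition \ref{propxy} we have $X_n\preceq L$ or $Y_n\preceq L$. In the first case, $X_n$ is itself tight $\mathcal T(n)$-universal by Proposition \ref{propdiag1}; hence newness of $L$ rules out $X_n\prec L$, forcing $L=X_n$. The second case is identical using Proposition \ref{propdiag2}. Thus $L\in\{X_n,Y_n\}$.

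For the newness of $X_n$ and $Y_n$, I would argue that deleting any single diagonal entry destroys $\mathcal T(n)$-universality, which (since newness is about proper sub-diagonal-lattices, and any proper sub-diagonal-lattice refines a one-entry deletion) is enough. The key observation, valid for any entry $a$ appearing in $X_n$ or $Y_n$, is that $a\le 2n$ while the smallest value representable by at least two coordinates of $X_n$ or $Y_n$ is $2n$ (achieved in $Y_n$ as $n+n$) and any single-coordinate value $bk^2$ with $b\in\{n,n+1,\dots,2n\}$ and $k\ge 2$ is at least $4n>2n$. Concretely, for $X_n$, removing the entry $n+i$ with $0\le i\le n-1$ leaves no way to represent $n+i$ itself (single-coordinate values $b\cdot 1^2$ miss $n+i$, $b\cdot k^2\ge 4n$ for $k\ge 2$, and multi-coordinate sums are $\ge 2n+1>n+i$); removing the entry $2n$ leaves $\langle n,n+1,\dots,2n-1\rangle$, which cannot represent $2n$ because a multi-coordinate sum is at least $n+(n+1)=2n+1$ and a single-coordinate value never equals $2n$. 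The same dichotomy handles $Y_n$: removing one of the two $n$'s reduces to the previous case for $2n$, and removing $n+i$ with $1\le i\le n-1$ again loses the integer $n+i$ since $n+n=2n>n+i$ and no admissible single-coordinate value equals $n+i$.

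Combining these two parts yields Theorem \ref{thmge4}. The only step that requires care is the newness verification; the main obstacle is simply keeping track of which integer is lost when each coordinate is dropped, which is straightforward because the entries of $X_n$ and $Y_n$ lie in the narrow window $[n,2n]$ where multi-coordinate sums already exceed $2n$.
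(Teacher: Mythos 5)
Your proof is correct and follows the same route as the paper, which derives the theorem simply by citing Propositions \ref{propxy}, \ref{propdiag1} and \ref{propdiag2}; your explicit window argument showing that $X_n$ and $Y_n$ are new is a sound filling-in of a step the paper leaves implicit. (Alternatively, their newness follows directly from Proposition \ref{propxy}: a $\mathcal T(n)$-universal proper diagonal sublattice of $X_n$ or $Y_n$ would automatically be tight, hence would have to contain $X_n$ or $Y_n$, which is impossible since it has rank at most $n$ while both have rank $n+1$.)
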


\begin{table}[ht]
\caption{New diagonal tight $\mathcal T(3)$-universal $\z$-lattices $\langle a_1,a_2,\dots,a_k\rangle$} 
\label{tableg3}
	\begin{center}
		\begin{tabular}{|cccccc|c|}
			\Xhline{1pt}
			$a_1$&$a_2$&$a_3$&$a_4$&$a_5$&$a_6$& Conditions on $a_k  \ (5\le k\le6)$ \\
			\hline
			3&3&3&4&5&&\\
			3&3&4&4&4&5&\\			
			3&3&4&4&5&$a_6$&$5\le a_6\le 18,\ a_6\neq 6,8,9,10,16,17$\\
			3&3&4&5&5&$a_6$&$5\le a_6\le 18,\ a_6\neq 6,8,9,10,16,17$\\
			3&3&4&5&$a_5$&&$6\le a_5\le 10,\ a_5\neq 7$\\
			3&3&4&5&7&$a_6$&$7\le a_6\le 25,\ a_6\neq 8,9,10,23,24$\\
			3&3&4&5&11&13&\\
			3&3&4&5&13&14&\\
			\hline
			3&4&4&5&6&&\\
			3&4&5&5&6&&\\	
			3&4&5&6&$a_5$&&$6\le a_5\le 35,\ a_5\neq 11,33,34$\\
			3&4&5&6&11&$a_6$&$a_6=11$ or $33\le a_6\le 46,\ a_6\neq 35,44,45$\\
			\Xhline{1pt}
		\end{tabular}
	\end{center}
\end{table}

\begin{prop} \label{propg3}
Every diagonal $\z$-lattice listed in Table \ref{tableg3} is tight $\mathcal T(3)$-universal.
\end{prop}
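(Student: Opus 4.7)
The plan is to verify, for each lattice $L = \langle a_1, \ldots, a_k\rangle$ listed in Table \ref{tableg3}, that $Q(L) = \mathcal T(3)$. Tightness is immediate since every $a_i \geq 3$ forces $1, 2 \notin Q(L)$, so the content of the proposition is to show that $L$ represents every integer $n \geq 3$.

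For each row of the table I would stratify $n \geq 3$ by its residue modulo $3$, since every listed lattice has $a_1 = 3$. Given $n$, write $n = d + n'$ where $d = a_{j_1} x_1^2 + \cdots + a_{j_s} x_s^2$ is a small value taken from $Q(L)$ using only one or two coordinates (typically the last entries $a_{k-1}, a_k$), and ask that $n'$ be represented by the sublattice $M \preceq L$ consisting of the remaining coordinates. For the vast majority of rows, the relevant $M$ is, after extracting common factors, a class-number-one ternary or quaternary diagonal form --- typically a scaled version of $\langle 1, 1, 3\rangle$, $\langle 1, 2, 3\rangle$, $\langle 1, 2, 6\rangle$, or $\langle 1, 1, 1, 3\rangle$ --- whose set $Q(M)$ is explicitly determined by the local-global principle together with the tables of \cite{om2}. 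One then checks that, modulo a suitable conductor, the chosen offset set $\{d_1, \ldots, d_t\} \subseteq Q(L)$ covers every residue class compatible with $Q(M)$, so that $n - d_i \in Q(M)$ for some $i$ whenever $n$ exceeds an explicit bound $N_0$.

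The remaining integers $3 \leq n < N_0$ are handled by direct computer-assisted verification, exactly as in the proofs of Propositions \ref{propdiag1} and \ref{propdiag2}. Rows with a single free parameter (e.g.\ $a_5$ in the $\langle 3,3,4,5,a_5\rangle$ family, or $a_6$ in the $\langle 3,3,4,5,7,a_6\rangle$ family) are treated uniformly: one first fixes a ternary sublattice $M$ independent of $a_k$, and then shows that the offsets $c^2 a_k$ for $c \in \{0, 1, 2, 3, \ldots\}$ exhaust the missing residues modulo the conductor of $M$.

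The main obstacle will be the combination of (a) the sheer number of families and (b) the rows whose free parameter $a_k$ is forced to lie in a sparse set. For instance, the row $\langle 3, 4, 5, 6, a_5\rangle$ with $a_5$ ranging over $\{6, \ldots, 35\} \setminus \{11, 33, 34\}$ must cope with the failure at $35$ of the base form $\langle 3,4,5,6\rangle$ (noted in the remark above), and the row $\langle 3, 4, 5, 6, 11, a_6\rangle$ with $a_6$ restricted to a very short list near $45$ offers only a handful of useful offsets beyond $\langle 3,4,5,6\rangle$ itself; one must check by hand that these suffice to hit every missing residue modulo the appropriate conductor. Organizing these case splits so that the explicit bound $N_0$ remains small enough to permit a tractable direct check is the crux of the argument.
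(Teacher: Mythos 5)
Your overall strategy --- peel off one or two coordinates as square offsets, reduce to a ternary sublattice whose represented set is controlled by the local--global principle, cover residues modulo a conductor, and finish with a finite computer check --- is exactly the engine of the paper's proof, so the proposal is workable. But the paper organizes the argument in a way that dissolves the case explosion you identify as the crux. The key observation (in the spirit of Proposition \ref{propxy}, and visible by inspecting Table \ref{tableg3}) is that \emph{every} lattice $L$ in the table satisfies $X_3=\langle 3,4,5,6\rangle \preceq L$ or $Y_3=\langle 3,3,4,5\rangle \preceq L$. The paper therefore checks directly that each $L$ represents every $m$ with $3\le m\le 2205$, and then proves once and for all that both $X_3$ and $Y_3$ represent every integer $u>2205$: for $Y_3$ via $K=\langle 3,3,4\rangle$ (class number one) and offsets $5d^2$ with $d\in\{0,1,\dots,5\}$; for $X_3$ via $K_1=\langle 3,4,6\rangle$ and offsets $5d^2$ with $d$ ranging over a twelve-element set. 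Note that $h(K_1)=2$, so your blanket appeal to class-number-one sublattices would not quite suffice for the $X_3$ branch; the paper circumvents this by showing that any even integer represented by the other class $\langle 1,6,12\rangle$ is already represented by $K_1$ (through its sublattice $\langle 4,6,12\rangle$), so that even integers locally represented by $K_1$ are globally represented. With this organization there are only two asymptotic arguments and no per-family conductor analysis: the sporadic exclusions you worry about (the failure of $\langle 3,4,5,6\rangle$ at $35$, the sparse admissible set for $a_6$ in $\langle 3,4,5,6,11,a_6\rangle$) are absorbed entirely into the uniform finite check below $2205$ and into the separate combinatorial Proposition \ref{propg3cand} that determines which tuples appear in the table at all. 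Your plan would presumably succeed if carried through, but at the cost of many more hand-tuned covering arguments than the paper actually needs.
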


\begin{proof}
Let $L$ be any $\z$-lattice in Table \ref{tableg3}. One may directly check that $L$ represents all integers $m$ with $3\le m\le 2205$.  Furthermore, since $X_3$ or $Y_3$ is represented by $L$,  it suffices to show that both $X_3$ and $Y_3$ represent all integers greater than $2205$.
Let $u$ be an integer greater than $2205$.

First, assume that $X_3\lra L$.
Let $K_1=\langle 3,4,6\rangle$. Note that $h(K_1)=2$ and the other $\z$-lattice in the genus of $K_1$ is $K_2=\langle 1,6,12\rangle$.
If an even positive integer $w$ is represented by $K_2$, then $w\lra \langle 4,6,12\rangle$ and thus $w\lra K_1$. Hence any even integer that is locally represented by  $K_1$ is represented by $K_1$ globally.  
Now, one may easily verify that there is an integer 
$$
d\in \{0,1,2,3,4,5,6,7,9,11,15, 21\}
$$
such that $u-5d^2$ is an even positive integer that is locally represented by $K_1$.
It follows that $u-5d^2\lra K_1$ and thus we have $u\lra X_3$.

Now, assume that $Y_3 \lra L$.
Let $K=\langle 3,3,4\rangle$.
Note that $h(K)=1$.
One may easily verify that there is an integer $d\in \{0,1,2,3,4,5\}$ such that $u-5d^2$ is locally represented by $K$.
Thus $u-5d^2\lra K$ and we have $u\lra Y_3$.
This completes the proof.
\end{proof}

Let $n$ be a positive integer. 
For any positive integers $b_1\le b_2\le \cdots \le b_k$, we define a set $\Psi(b_1,b_2,\dots,b_k)$ by
$$
\Psi(b_1,b_2,\dots,b_k)=\left\{ u\in \mathcal T(n) : u\nlra \langle b_1,b_2,\dots,b_k\rangle \right\}
$$
and we also define $\psi(b_1,b_2,\dots,b_k)$ as
$$
\psi(b_1,b_2,\dots,b_k)=\begin{cases}\infty &\text{if}\ \ \Psi(b_1,b_2,\dots,b_k)=\emptyset,\\
\min\left(\Psi(b_1,b_2,\dots,b_k)\right)&\text{otherwise}.\end{cases}
$$
For $l=1,2,\dots,$ we define sets $A(l)$, $B(l)$, $B'(l)$, and $C(l)$ recursively as follow;
$$
\begin{array}{l}
A(1)=\{(n)\},\\[0.2em]
B(l)=\left\{ (a_1,\dots,a_l)\in A(l) : \psi(a_1,\dots,a_l)=\infty \right\},\\[0.2em]
B'(l)=\left\{ (a_1,\dots,a_l)\in B(l) : (b_1,\dots,b_m)\not\in B(m)\ \text{if}\ (b_1,\dots,b_m)\prec (a_1,\dots,a_l)\right\},\\[0.2em]
C(l)=A(l)- B(l),\\[0.2em]
A(l+1)=\bigcup_{(a_1,\dots,a_l)\in C(l)}\left\{ (a_1,\dots,a_l,a_{l+1})\in \n^{l+1} : a_l\le a_{l+1}\le \psi(a_1,\dots,a_l) \right\}.
\end{array}
$$
From now to the end of this section, we always assume that $a_1\le a_2\le \cdots \le a_k$ whenever we consider a diagonal $\z$-lattice $\langle a_1,a_2,\cdots,a_k\rangle$.
Let $L=\langle a_1,a_2,\dots,a_k\rangle$ be a diagonal tight $\mathcal T(n)$-universal $\z$-lattice. Clearly, $a_1=n$.  From the definition, one may easily see that $L$ is new if and only if $(a_1,a_2,\dots,a_k)\in B'(k)$.
Hence the set of all new diagonal tight $\mathcal T(n)$-universal $\z$-lattices  corresponds exactly to $\bigcup_{k=1}^{\infty} B'(k)$.  
Therefore, to find all ``new'' diagonal tight  $\mathcal T(n)$-universal $\z$-lattices, it suffices to compute the above sets recursively.

\begin{prop} \label{propg3cand}
There are exactly $79$ new diagonal tight $\mathcal T(3)$-universal $\z$-lattices, which are listed in Table \ref{tableg3}.
\end{prop}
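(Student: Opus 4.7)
The plan is to execute the recursive enumeration $(A(l), B(l), B'(l), C(l))$ defined just above the proposition and to confirm that $\bigcup_{l \ge 1} B'(l)$ consists precisely of the $79$ tuples listed in Table \ref{tableg3}. Proposition \ref{propg3} already supplies one direction by showing that every tuple in the table yields a tight $\mathcal T(3)$-universal $\z$-lattice, and a direct check that removing any single entry from such a tuple produces a lattice failing to represent some integer $\ge 3$ shows each is new. Hence only completeness needs to be established: no other new diagonal tight $\mathcal T(3)$-universal $\z$-lattice exists.

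First I would use Proposition \ref{propxy} to split the search: any such lattice must contain $X_3 = \langle 3,4,5,6 \rangle$ or $Y_3 = \langle 3,3,4,5 \rangle$ as a subsequence, so the recursion reduces to two disjoint branches of extensions. For each partial tuple $(a_1, \dots, a_l) \in C(l)$, I would compute $\psi(a_1, \dots, a_l)$ by searching for the smallest element of $\mathcal T(3)$ not represented by $\langle a_1, \dots, a_l \rangle$. When $l$ is small this is a direct enumeration; when $l$ is larger the lattice typically contains a ternary or quaternary diagonal sublattice of class number one (such as $\langle 3,3,4 \rangle$, $\langle 3,4,6 \rangle$, or the universal $\langle 1,2,3,3 \rangle$), so the genus-theoretic arguments used in the proofs of Propositions \ref{propdiag1}, \ref{propdiag2}, and \ref{propg3} pin down $\psi$ as either $\infty$ or a fairly small integer. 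Lemma \ref{lemdiag1} supplies the essential bound: the next appended entry $a_{l+1}$ lies in the finite interval $[a_l, \psi(a_1, \dots, a_l)]$.

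To separate $B'(l)$ from $B(l) \setminus B'(l)$, I would apply Lemma \ref{lemdiag2} to detect tuples whose $\mathcal T(3)$-universality already follows from a shorter subsequence; such tuples are excluded. This accounts for the apparent gaps in the permitted ranges of $a_5$ and $a_6$ in Table \ref{tableg3}, for example the omissions $a_6 \ne 6, 8, 9, 10, 16, 17$ in rows three and four, and the omissions $a_5 \ne 11, 33, 34$ in row eleven. Termination of the procedure is automatic: admissible entries form a finite interval at each step, and as soon as a new $\mathcal T(3)$-universal tuple appears as a proper subsequence, every further extension is filtered out of $B'$.

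The main obstacle is the combinatorial bookkeeping together with the many case-by-case computations of $\psi$, especially for quinary and senary candidates built over $X_3$, where $\psi$ can remain finite up to fairly large appended entries (e.g.\ up to $46$ in row twelve) before a suitable class-number-one sublattice forces $\mathcal T(3)$-universality. In practice the enumeration is best carried out with computer assistance, and its output must be cross-checked against the sufficiency and newness assertions already verified for the tuples in Table \ref{tableg3}.
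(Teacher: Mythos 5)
Your overall strategy is the paper's: run the recursion on $A(l),B(l),B'(l),C(l)$, bound each appended entry by Lemma \ref{lemdiag1} via $\psi$, get sufficiency from Proposition \ref{propg3}, and check newness by single-entry removals (which suffices, since any $\mathcal T(3)$-universal proper subsequence extends to one obtained by deleting a single entry). The one genuine gap is your assertion that ``termination of the procedure is automatic.'' It is not: the recursion extends every tuple in $C(l)$, and $C(l)$ is never empty --- for example $(3,3,\dots,3)$ lies in $C(l)$ for every $l$, since $\psi(3,\dots,3)=4$ is always finite --- so the tree of candidates is infinite and the enumeration by itself never certifies that the list of $79$ is complete. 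What has to be proved, and what the paper devotes the final part of its proof to, is that $B'(k)=\emptyset$ for all $k\ge 7$: one lists the $26$ tuples of $C(6)$ explicitly and shows, for each, that any tight $\mathcal T(3)$-universal extension is forced by Lemma \ref{lemdiag1} to contain entries completing a proper subsequence that is already tight $\mathcal T(3)$-universal (e.g.\ any such extension of $(3,3,3,3,3,3)$ must contain the entries $4$ and $5$, hence contains $\langle 3,3,3,4,5\rangle$), and is therefore not new. Your sentence about tuples being ``filtered out of $B'$ as soon as a universal proper subsequence appears'' is the right mechanism, but you never argue that this happens along every infinite branch at a uniform finite stage; that cutoff argument is the essential non-routine step and must be supplied.

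A secondary point: the statement is a precise count, so the write-up must actually commit to and verify the intermediate tallies (the paper records $\#B'(5)=34$ and $\#B'(6)=45$, with $\#A(5)=51$ and $\#A(6)=111$ candidates examined), rather than only describing the procedure that would produce them. Your appeal to Proposition \ref{propxy} to split into an $X_3$-branch and a $Y_3$-branch is harmless but redundant, since the recursion already generates exactly those branches at level $4$.
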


\begin{proof} From the above observation, we have to compute the sets $A(l),B(l),B'(l)$, and $C(l)$ on the case when  $n=3$. One may easily compute that
$$
\begin{array}{l}
A(2)=\{ (3,3),(3,4)\},\\[0.3em]
A(3)=\{ (3,3,3),(3,3,4),(3,4,4),(3,4,5)\},\\[0.3em]
A(4)=\left\{ \begin{array}{l}(3,3,3,3),(3,3,3,4),(3,3,4,4),(3,3,4,5),\\
\hskip 1cm (3,4,4,4),(3,4,4,5),(3,4,5,5),(3,4,5,6)\end{array}\right\}.
\end{array}
$$
For $(a_1,a_2,a_3,a_4)\in A(4)$, one may easily check that
$$
\psi(a_1,a_2,a_3,a_4)=\begin{cases} a_4+1&\text{if}\ \ (a_1,a_2,a_3,a_4)\neq (3,3,4,5),(3,4,5,6),\\
13&\text{if}\ \ (a_1,a_2,a_3,a_4)=(3,3,4,5),\\
35&\text{if}\ \ (a_1,a_2,a_3,a_4)=(3,4,5,6).\end{cases}
$$
Hence we have $A(5)=G_1\cup G_2\cup G_3$, where
$$
\begin{array}{l}
G_1=\left\{ \begin{array}{l}(3,3,3,3,3),(3,3,3,3,4),(3,3,3,4,4),(3,3,3,4,5),\\
\hskip 1cm (3,3,4,4,4),(3,3,4,4,5),(3,4,4,4,4),(3,4,4,4,5),\\
\hskip 2cm (3,4,4,5,5),(3,4,4,5,6),(3,4,5,5,5),(3,4,5,5,6)\end{array}\right\},\\[1.6em]
G_2=\left\{ (3,3,4,5,a_5) : 5\le a_5\le 13\right\}, \\[0.3em]
G_3=\left\{ (3,4,5,6,a_5) : 6\le a_5\le 35\right\}.
\end{array}
$$
Among $51$ quinary diagonal $\z$-lattices in $A(5)$, there are exactly $34$ tight $\mathcal T(3)$-universal $\z$-lattices, which are listed in Table \ref{tableg3}.  Note that $\mathcal T(3)$-universalities of those $34$ lattices were proved in Proposition \ref{propg3}.  For each of the remaining $17$ lattices, one may easily find the smallest integer that is not represented by it. From this, one may compute that $\#A(6)=111$.   Among $111$ senary diagonal $\z$-lattices in $A(6)$, there are exactly $85$ tight $\mathcal T(3)$-universal $\z$-lattices. Among those tight $\mathcal T(3)$-universal lattices, only $45$ lattices are new, which are listed in Table \ref{tableg3}.  Summing up all, we have
$$
\begin{array}{llll}
\#A(4)=8,&\#B(4)=0,&\#B'(4)=0,&\#C(4)=8,\\[0.2em]
\#A(5)=51,&\#B(5)=34,&\#B'(5)=34,&\#C(5)=17,\\[0.2em]
\#A(6)=111,&\#B(6)=85,&\#B'(6)=45,&\#C(6)=26.\\[0.2em]
\end{array}
$$

To prove the proposition, it suffices to show that there is no new diagonal tight $\mathcal T(3)$-universal $\z$-lattice of rank greater than 6. Suppose that there is a new diagonal tight $\mathcal T(3)$-universal $\z$-lattice $L=\langle a_1,a_2,\dots,a_k\rangle$ of rank $k\ge 7$.
One may easily deduce $(a_1,a_2,\dots,a_6)\in C(6)$ from the definition of new tight $\mathcal T(3)$-universality.
Note that the set $C(6)$ is as follows:
$$
\left\{ \begin{array}{llll}(3,3,3,3,3,3),&(3,3,3,3,3,4),&(3,3,3,3,4,4),&(3,3,3,4,4,4),\\
(3,3,4,4,4,4),&(3,3,4,4,5,16),&(3,3,4,4,5,17),&(3,3,4,5,5,16),\\
(3,3,4,5,5,17),&(3,3,4,5,7,23),&(3,3,4,5,7,24),&(3,3,4,5,11,11),\\
(3,3,4,5,11,12),&(3,3,4,5,12,12),&(3,3,4,5,12,13),&(3,3,4,5,13,13),\\
(3,4,4,4,4,4),&(3,4,4,4,4,5),&(3,4,4,4,5,5),&(3,4,4,5,5,5),\\
(3,4,5,5,5,5),&(3,4,5,6,11,44),&(3,4,5,6,11,45),&(3,4,5,6,33,33),\\
(3,4,5,6,33,34),&(3,4,5,6,34,34)&&
\end{array}\right\}
$$
Since all the other cases can be proved in a similar manner, we only provide the proof of the first case when $(a_1,a_2,\dots,a_6)=(3,3,3,3,3,3)$.
Suppose that $\ell=\langle 3,3,3,3,3,3,a_7,\dots,a_k\rangle$ is new tight $\mathcal T(3)$-universal for some integers $a_7,\dots,a_k$ with $3\le a_7\le\dots \le a_k$.  
Since $4,5 \lra \ell$, there are integers $i,j$ with $7\le i,j\le k$ such that $a_i=4$ and $a_j=5$. 
Since $\langle 3,3,3,4,5\rangle$ is tight $\mathcal T(3)$-universal by Proposition \ref{propg3}, $\ell$ is not new. This completes the proof. 
\end{proof}

\begin{table}[ht]
\caption{New diagonal tight $\mathcal T(2)$-universal $\z$-lattices $\langle a_1,a_2,\dots,a_k\rangle$} 
\label{tableg2}
	\begin{center}
		\begin{tabular}{|ccccc|c|}
			\Xhline{1pt}
			$a_1$&$a_2$&$a_3$&$a_4$&$a_5$& Conditions on $a_5$\\
			\hline
			2&2&2&2&3&\\
			2&2&2&3&$a_5$&$3\le a_5\le 17,\ a_5\neq 4,16$\\
			2&2&3&3&$a_5$&$3\le a_5\le 9,\ a_5\neq 4,8$\\
			2&2&3&4&&\\
			2&2&3&5&6&\\
			2&2&3&6&$a_5$&$6\le a_5\le 15,\ a_5\neq 14$\\
			\hline
			2&3&3&3&4&\\
			2&3&3&4&$a_5$&$4\le a_5\le 13,\ a_5\neq 5,8,12$\\
			2&3&4&4&$a_5$&$4\le a_5\le 17,\ a_5\neq 5,8,16$\\
			2&3&4&5&&\\
			2&3&4&6&$a_5$&$6\le a_5\le 23,\ a_5\neq 8,22$\\
			2&3&4&7&$a_5$&$7\le a_5\le 17,\ a_5\neq 8,16$\\
			2&3&4&8&&\\
			2&3&4&9&10&\\
			2&3&4&10&$a_5$&$10\le a_5\le 23,\ a_5\neq 22$\\
			\hline	
			\Xhline{1pt}
		\end{tabular}
	\end{center}
\end{table}

\begin{prop} \label{propg2}
Every diagonal $\z$-lattice listed in Table \ref{tableg2} is tight $\mathcal T(2)$-universal.
\end{prop}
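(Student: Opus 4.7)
The plan is to follow the proof template of Proposition \ref{propg3}. For each diagonal $\z$-lattice $L$ listed in Table \ref{tableg2}, I would (i) verify by direct computation that every integer $m$ with $2 \le m \le M_0$ is represented by $L$, for a suitable cut-off $M_0$, and then (ii) handle the case $u > M_0$ via a local-global reduction applied to a well-chosen ternary sublattice $K \preceq L$.

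By Proposition \ref{propxy}, every diagonal tight $\mathcal T(2)$-universal $\z$-lattice contains $X_2 = \langle 2,3,4 \rangle$ or $Y_2 = \langle 2,2,3 \rangle$ as a diagonal sublattice. Accordingly, I would split Table \ref{tableg2} into two blocks: those with $a_1 = a_2 = 2$ (which contain $Y_2$) and those with $a_1 = 2,\ a_2 = 3$ (which contain $X_2$). For each $L$ in the first block I would take $K = \langle 2,2,3 \rangle$; for each $L$ in the second block I would take $K = \langle 2,3,4 \rangle$ or, when convenient, an even smaller ternary diagonal sublattice of $L$ such as $\langle 2,3,3 \rangle$. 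In every relevant case the chosen $K$ has class number one, so $Q(K) = Q(\gen(K))$, and the latter is characterized by a short list of local conditions at the primes $2$ and $3$.

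Then for each $L$ and each $u > M_0$, I would exhibit a small finite set $D_L$ of tuples $(d_{i_1}, \dots, d_{i_r}) \in \z^r$ indexed by the diagonal coordinates of $L$ outside $K$, and verify that some tuple yields $u - \sum_{\nu} a_{i_\nu} d_{i_\nu}^2 \in Q(\gen(K)) = Q(K)$. The verification reduces to checking, for every residue class of $u$ modulo a small modulus, that at least one tuple in $D_L$ produces a residue compatible with the local conditions defining $Q(K)$; once this is done, $u - \sum_\nu a_{i_\nu} d_{i_\nu}^2 \lra K$ globally, hence $u \lra L$. This mirrors the role played by $K = \langle 3,4,6 \rangle$ and $K = \langle 3,3,4 \rangle$ in the proof of Proposition \ref{propg3}.

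The main obstacle is the sheer volume of case analysis: Table \ref{tableg2} contains many lattices, grouped into several structural families. Although each individual case is a routine exercise once $K$ and $D_L$ are specified, organizing the cases compactly and choosing $M_0$ uniformly across families will require some bookkeeping. In particular, the rows with large supplementary entries (for example $a_5 = 17$ or $a_5 = 23$ in the first block, and $a_5 = 22,\,23$ in the second block) force a wider range of $d$-values and a residue analysis modulo a somewhat larger modulus. I expect the write-up to treat one representative lattice from each row of Table \ref{tableg2} in detail and to indicate briefly how the same scheme extends to the remaining lattices in the row, exactly as in the proof of Proposition \ref{propg3}.
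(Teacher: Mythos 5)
Your overall strategy --- verify $2\le m\le M_0$ by direct computation, then for $u>M_0$ subtract a few squares supported on the diagonal entries outside a well-chosen ternary sublattice $K$ and invoke a local-global principle for $K$ --- is exactly the strategy of the paper's proof (which takes $M_0=575$). However, there is a concrete error in your treatment of the second block: $\langle 2,3,4\rangle$ has class number $2$, not $1$ (the other class in its genus is $\langle 1,2,12\rangle$), so your assertion that ``in every relevant case the chosen $K$ has class number one, so $Q(K)=Q(\gen(K))$'' fails precisely for the lattice you lean on most. The paper salvages $\langle 2,3,4\rangle$ the same way it salvages $\langle 3,4,6\rangle$ in Proposition \ref{propg3}: only \emph{even} integers locally represented by $\langle 2,3,4\rangle$ are guaranteed to be globally represented, so one must arrange $u-\sum a_{i_\nu}d_{i_\nu}^2$ to be even, which requires an odd diagonal entry \emph{outside} $K$ to flip parity.

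This is not mere bookkeeping, because several rows of Table \ref{tableg2} have no odd entry outside $\langle 2,3,4\rangle$: namely $\langle 2,3,4,8\rangle$ and the rows $\langle 2,3,4,4,a_5\rangle$, $\langle 2,3,4,6,a_5\rangle$, $\langle 2,3,4,10,a_5\rangle$ with $a_5$ even. For odd $u$ your scheme with $K=\langle 2,3,4\rangle$ cannot work there. The paper's fix is to move the entry $3$ out of the ternary lattice, taking $K=\langle 2,4,8\rangle$, $\langle 2,4,4\rangle$, $\langle 2,4,6\rangle$, $\langle 2,4,10\rangle$ respectively (each genuinely of class number one) and subtracting $3d^2$, so that the parity of $d$ controls the parity of the target. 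A smaller point: when you use $K=\langle 2,2,3\rangle$ with a single auxiliary entry $t$, you need $t\not\equiv 0\pmod{8}$ and $t\not\equiv 0\pmod{3}$ to escape the local exceptional classes of $K$; for rows where the available auxiliary entries violate this (e.g.\ $\langle 2,2,3,3,a_5\rangle$ with $3\mid a_5$) you must subtract two squares, as the paper does in its case (iv). With these adjustments your plan coincides with the paper's proof.
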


\begin{proof} Let $L$ be any $\z$-lattice listed in Table \ref{tableg2}.  One may directly check that  $L$ represents all integers $m$ with $2\le m\le 575$.
Hence it suffices to show that any integer $u$ greater than $575$ is represented by $L$. 
\bigskip 

\noindent {\bf (i)} When $\langle 2,3,4,t\rangle \lra L$ for some positive odd integer $t$,
we put $K_1=\langle 2,3,4\rangle$.
Note that $h(K_1)=2$ and the other $\z$-lattice in the genus of $K_1$ is $K_2=\langle 1,2,12\rangle$. 
If an even positive integer is represented by $K_2$, then it is represented by $\langle 4,2,12\rangle$ and thus by $K_1$ too.
Therefore we have
$$
Q(\gen(K_1))\cap 2\z \subset Q(K_1).
$$
One may also  verify that there is an integer $d\in \{0,1,2,3,5\}$ such that
$$
u-td^2\in Q(\gen(K_1))\cap 2\z
$$
and thus we have $u-td^2\lra K_1$. Note that $u-td^2\ge u-23\cdot5^2>0$. 
Therefore we have $u\lra \langle 2,3,4,t\rangle \lra L$.
Since the proofs of all the remaining cases are quite similar to this, we only provide some data that are needed for each case. We first provide a ternary $\z$-lattice $K$ with $h(K)=1$ and a positive integer $b$ or positive integers $b_1$ and $b_2$ such that 
$$
K\perp \langle b\rangle \lra L \quad \text{or} \quad  K\perp \langle b_1,b_2\rangle \lra L.
$$ 
Finally, we provide a finite set $H$ in the former case or finite sets $H_1$ and $H_2$ in the latter case such that there is an integer $h \in H$ such that $u-bh^2 \lra K$ and hence $u \lra L$, or there are integers $h_1 \in H_1$ and $h_2 \in H_2$ such that $u-b_1h_1^2-b_2h_2^2 \lra K$.  Since we always choose a $\z$-lattice $K$ with class number $1$, the existence of a global representation is equivalent to the existence of a local representation. 

\noindent {\bf (ii)} When $\langle 2,2,3,t\rangle \lra L$ for some $t\in \n$ with $t\not\equiv 0\Mod 8$ and $t\not\equiv 0\Mod 3$,
we take $K=\langle 2,2,3\rangle$, $b=t$, and $H=\{0,1,3,4\}$.

\noindent {\bf (iii)} When  $L=\langle 2,3,4,8\rangle$, we 
take $K=\langle 2,4,8\rangle$, $b=3$, and $H=\{0,1,2,3,4\}$.

\noindent {\bf (iv)} When $L=\langle 2,2,3,a_4,a_5\rangle$ for some 
$$
(a_4,a_5)\in \{(3,3),(3,6),(3,9),(6,6),(6,9),(6,12),(6,15)\},
$$ 
we take $K=\langle 2,2,3\rangle$, $b_1=a_4$, $b_2=a_5$, and $H_1=\{0,1\}$, $H_2=\{0,1,2,3,4,5\}$.

\noindent {\bf (v)} When $L=\langle 2,3,4,4,a_5\rangle$ for some $a_5\in \{4,6,10,12,14\}$,
we take $K=\langle 2,4,4\rangle$, $b_1=3$, $b_2=a_5$, and $H_1=\{0,1,2,3\}$, $H_2=\{0,1,2\}$.

\noindent {\bf (vi)} When $L=\langle 2,3,4,6,a_5\rangle$ for some $a_5\in \{6,10,12,14,16,18,20\}$,
we take $K=\langle 2,4,6\rangle$, $b_1=3$, $b_2=a_5$, and $H_1=\{0,1,2,3\}$, $H_2=\{0,1,2\}$.

\noindent {\bf (vii)} When $L=\langle 2,3,4,10,a_5\rangle$ for some $a_5\in \{10,12,14,16,18,20\}$,
we take $K=\langle 2,4,10\rangle$, $b=3$, and $H=\{1,2,3,4\}$.

Since one of the above cases holds for any $\z$-lattice $L$ listed in Table \ref{tableg2}, this completes the proof.  
\end{proof}
\begin{prop} \label{propg2cand}
There are exactly $90$  new diagonal tight $\mathcal T(2)$-universal $\z$-lattices which are listed in Table \ref{tableg2}.
\end{prop}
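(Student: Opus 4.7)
The plan is to mirror the recursive enumeration used in the proof of Proposition \ref{propg3cand}, specialized to $n=2$. First I would compute the sets $A(l), B(l), B'(l), C(l)$ inductively from $A(1)=\{(2)\}$. A short calculation gives $\psi(2)=3$, $\psi(2,2)=3$, $\psi(2,3)=4$, $\psi(2,2,2)=3$, $\psi(2,2,3)=6$, $\psi(2,3,3)=4$, and $\psi(2,3,4)=10$; this last value matches the remark following Proposition \ref{propdiag1} that $\langle 2,3,4\rangle$ does not represent $10$. Since all these $\psi$ values are finite, $B(l)=\emptyset$ and $C(l)=A(l)$ for $l\le 3$, and the extension rule yields a list of exactly $15$ quaternary candidates in $A(4)$.

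Next I would identify, by direct inspection against Proposition \ref{propg2},
\[
B'(4)=B(4)=\{(2,2,3,4),(2,3,4,5),(2,3,4,8)\},
\]
which accounts for the three rank-$4$ rows of Table \ref{tableg2}. The remaining twelve tuples form $C(4)$; for each I would compute $\psi$ and then enumerate $A(5)$ by appending $a_5$ in the allowed range $a_4\le a_5\le \psi(a_1,a_2,a_3,a_4)$. A tuple-by-tuple verification — universality of the successful tuples supplied by Proposition \ref{propg2}, and non-universality of the failures certified by exhibiting one small integer in $\Psi(a_1,\dots,a_5)$ — then shows that $B'(5)$ coincides with the $87$ quinary rows of Table \ref{tableg2}. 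Adding gives $|B'(4)|+|B'(5)|=3+87=90$ new diagonal tight $\mathcal T(2)$-universal $\z$-lattices.

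It remains to rule out new diagonal tight $\mathcal T(2)$-universal $\z$-lattices of rank at least $6$, following the final case analysis in Proposition \ref{propg3cand}. Suppose $L=\langle 2,a_2,\dots,a_k\rangle$ with $k\ge 6$ is new and tight $\mathcal T(2)$-universal; then $(a_1,\dots,a_5)\in C(5)$, for otherwise some initial segment already lies in $B'(l)$ for $l\le 5$ and $L$ fails to be new. For each of the finitely many (explicitly listed) elements of $C(5)$ I would exhibit one or two integers $m$ with $a_5<m<a_5+2$, $m\in\Psi(a_1,\dots,a_5)\cap Q(L)$. Lemma \ref{lemdiag1} then forces each such $m$ to appear as some $a_j$ with $j\ge 6$, and one checks in each case that the resulting proper subsequence of $(a_1,\dots,a_k)$ already belongs to $B'(4)\cup B'(5)$, contradicting the newness of $L$.

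The main obstacle is the sheer bookkeeping in the rank-$5$ step: one must verify universality for $87$ separate quinary lattices and certify failure for all the remaining tuples in $A(5)\setminus B(5)$. The universality proofs all follow the template of Proposition \ref{propg2}, namely descend to a class-number-one ternary sublattice and squeeze the remaining congruence classes into $Q(L)$ by varying a small finite set of coefficients. So the difficulty is managerial rather than conceptual, and the rank-$\ge 6$ argument, while routine, requires handling each element of $C(5)$ in turn, exactly as in the single explicit case treated in Proposition \ref{propg3cand}.
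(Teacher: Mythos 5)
Your proposal is correct and follows essentially the same route as the paper: the same recursive computation of $A(l), B(l), B'(l), C(l)$ for $n=2$ (your counts $\#A(4)=15$, $\#B'(4)=3$, $\#B'(5)=87$ and the listed $\psi$-values all agree with the paper's data), and the same elimination of rank $\ge 6$ via the twelve elements of $C(5)$ — where your use of Lemma \ref{lemdiag1} plus a subsequence check is exactly what the paper packages as Lemma \ref{lemdiag2}. The only nitpick is that with $a_1=2$ there is exactly one integer $m$ with $a_5<m<a_5+2$, namely $m=a_5+1$, not ``one or two.''
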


\begin{proof}
Since the proof is quite similar to that of Proposition \ref{propg3cand} , we only provide some data that are needed  for the case when $n=2$. One may easily check by direct computations that 
$$
\begin{array}{llll}
\#A(3)=4,&\#B(3)=0,&\#B'(3)=0,&\#C(3)=4,\\[0.2em]
\#A(4)=15,&\#B(4)=3,&\#B'(4)=3,&\#C(4)=12,\\[0.2em]
\#A(5)=107,&\#B(5)=95,&\#B'(5)=87,&\#C(5)=12,\\[0.2em]
\end{array}
$$
and
$$
C(5)=\left\{ \begin{array}{llll} 
(2,2,2,2,2),&(2,2,2,3,16),&(2,2,3,3,8),&(2,2,3,5,5),\\[0.2em]
(2,2,3,6,14),&(2,3,3,3,3),&(2,3,3,4,12),&(2,3,4,4,16),\\[0.2em]
(2,3,4,6,22),&(2,3,4,7,16),&(2,3,4,9,9),&(2,3,4,10,22)\end{array}\right\}.
$$
For any $(a_1,a_2,\dots,a_5) \in C(5)$, one may directly show that there is no new diagonal tight $\mathcal T(2)$-universal $\z$-lattice $K$ such that $\langle a_1,a_2,\dots,a_5\rangle \prec K$ by using Lemma \ref{lemdiag2}.
This completes the proof.
\end{proof}

\section{ The minimum rank of tight $\mathcal T(n)$-universal lattices}

In this section, we give some explicit lower and upper bounds for $t(n)$ for any integer $n$ less than or equal to $36$. As explained in the introduction, we know that 
$$
t(1)=t(2)=t(3)=4.
$$ 
Furthermore, it is known that there are exactly $204$ tight $\mathcal T(1)$-universal quaternary $\z$-lattices (see \cite{b}). 


Let $n$ be a positive integer greater than $1$. 
To find all tight $\mathcal T(n)$-universal $\z$-lattices, one may use, so called, the escalation method (for this, see \cite{b}).  Let $L$ be a tight $\mathcal T(n)$-universal $\z$-lattice. Since $n, n+1 \lra L$, 
$$
\ell(1,a_1):=\begin{pmatrix} n&a_1\\a_1&n\end{pmatrix} \lra L \quad \text{or} \quad  \ell(2,a_2):=\begin{pmatrix} n&a_2\\a_2&n+1\end{pmatrix} \lra L
$$
for some integers $a_1, a_2$ with $0\le a_1\le \frac n2$, $0\le a_2\le\frac{n+1}2$. One may easily check that both binary $\z$-lattices $\ell(1,a_1)$ and $\ell(2,a_2)$ do not represent some integer $k$ with $n+1 \le k \le 2n$. Furthermore, since $L$ does not represent any positive integer less than $n$, there are integers $b_i$ and $c_i$ such that 
$$
\begin{pmatrix} n&a_1&b_1\\a_1&n&c_1\\b_1&c_1&k\end{pmatrix} \lra L \quad \text{or} \quad \begin{pmatrix} n&a_2&b_2\\a_2&n+1&c_2\\b_2&c_2&k\end{pmatrix} \lra L,
$$ 
where $-n \le b_1,c_1\le n$ and  $-\frac{2n+1}2\le b_2,c_2\le \frac {2n+1}2$.  If one of ternary $\z$-lattices given above represents an integer less than $n$, then we remove it.  Now, by continuing this process, we may find all candidates of tight $\mathcal T(n)$-universal $\z$-lattices of given rank. We provide some experimental results on some candidates of tight $\mathcal T(n)$-universal $\z$-lattices obtained from the escalation method:  

\noindent {\bf (1) } For any integer $n$ with $2\le n\le 7$, there are exactly $m_2$ isometry classes of quaternary $\z$-lattices with minimum $n$ which represent all integers from $n$ to $m_1$, and in fact, all of those  $\z$-lattices represent all integers from $n$ to $10^4$.
\begin{table} [ht]
\begin{tabular}{|c|c|c|c|c|c|c|c|c|}
\hline
$n$ & 2 & 3 & 4 & 5 & 6 & 7 \\
\hline
$m_1$ & 41 & 61 & 82 & 91 & 105 & 131 \\
\hline
$m_2$ & 308 & 1294 & 1222 & 1826 & 490 & 116 \\
\hline
\end{tabular}
\end{table}

\noindent {\bf (2) } There are exactly two candidates of tight $\mathcal T(8)$-universal quaternary $\z$-lattices, which are, in fact,  
$$
\begin{pmatrix}8&1&4&3\\ 1&9&1&4\\ 4&1&9&0\\ 3&4&0&9\end{pmatrix}\quad \text{and}\quad \begin{pmatrix}8&1&2&3\\ 1&9&4&4\\ 2&4&9&3\\ 3&4&3&9\end{pmatrix}.
$$
\noindent   {\bf (3) } There is only one candidate of tight $\mathcal T(9)$-universal quaternary $\z$-lattices, which is, in fact,  

$$
\begin{pmatrix}9&3&-3&1\\ 3&9&1&3\\ -3&1&10&3\\ 1&3&3&11\end{pmatrix}.
$$

\noindent  {\bf (4) }There is no tight $\mathcal T(n)$-universal quaternary $\z$-lattice for any $n$ with $10\le n\le 12$, and thus $t(n)$ is greater than $4$ for those integer $n$.

\begin{rmk} {\rm Let $L$ be the quaternary $\z$-lattice which is given as a candidate of a tight $\mathcal T(9)$-universal $\z$-lattice.  Then one may easily check that $dL=2^3\cdot 701$ and $L_p$ is isotropic over $\z_p$  for any prime $p$.  Hence, by Theorem 1.6 of [\cite{cas}, Chapter 11],  there is an integer $N$ such that $L$ represents all integers greater than $N$. Hanke developed in \cite{han} a method to compute the constant $N$ explicitly. However, the  constant $N$ in this case seems to be too large to check the representability of all integers less than $N$ by the $\z$-lattice $L$.}
\end{rmk}

\begin{lem} \label{lemnnnn}
Let $L$ be a $\z$-lattice and let $n$ be a positive integer.
If $\min(L)\in \{n,n+1\}$ and $L$ represents all integers from $n+1$ to $2n-1$, then the $\z$-lattice $ \langle n,n,n,n\rangle \perp L$ is tight $\mathcal T(n)$-universal, and thus $t(n)\le \rank (L)+4$.
\end{lem}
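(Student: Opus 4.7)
Write $M := \langle n,n,n,n\rangle \perp L$. To prove the lemma it suffices to verify that (i) $M$ represents no positive integer smaller than $n$, and (ii) $M$ represents every integer $k \geq n$.

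For (i), note that $\min(\langle n,n,n,n\rangle)=n$ and $\min(L)\in\{n,n+1\}$ by hypothesis, so $\min(M)=n$. Thus $M$ fails to represent any positive integer less than $n$.

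For (ii), I split into three cases depending on the size of $k$. If $k=n$, then $\langle n,n,n,n\rangle$ (hence $M$) represents $k$ via the vector $(1,0,0,0)$. If $n+1\leq k\leq 2n-1$, then $L$ (hence $M$) represents $k$ by the hypothesis that $L$ represents every integer in $[n+1,2n-1]$. For the remaining case $k\geq 2n$, write $k=nq+r$ with $0\leq r\leq n-1$; the inequality $k\geq 2n$ forces $q\geq 2$. If $r=0$, then $k=nq$ with $q\geq 2$, and Lagrange's four-square theorem gives $q=a^2+b^2+c^2+d^2$ for some nonnegative integers $a,b,c,d$, so $k=na^2+nb^2+nc^2+nd^2$ is represented by $\langle n,n,n,n\rangle$. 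If $1\leq r\leq n-1$, then rewrite $k=n(q-1)+(n+r)$; here $n(q-1)$ is a nonnegative multiple of $n$, hence represented by $\langle n,n,n,n\rangle$ via Lagrange, while $n+r\in[n+1,2n-1]$ is represented by $L$ by hypothesis. In either subcase $k$ is represented by the orthogonal sum $M$.

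Combining (i) and (ii) shows $Q(M)=\mathcal T(n)$, so $M$ is tight $\mathcal T(n)$-universal. Since $\rank(M)=\rank(L)+4$, the inequality $t(n)\leq \rank(L)+4$ follows immediately. No step here is a real obstacle; the whole argument reduces to Lagrange's four-square theorem applied to $\langle n,n,n,n\rangle$ together with the coverage of the residue window $[n+1,2n-1]$ provided by $L$.
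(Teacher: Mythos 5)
Your proof is correct and is exactly the ``quite straightforward'' argument the paper omits: decompose $k=n(q-1)+(n+r)$, cover the window $[n+1,2n-1]$ with $L$, and handle the multiples of $n$ with Lagrange's four-square theorem applied to $\langle n,n,n,n\rangle$. Nothing further is needed.
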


\begin{proof}
This is quite straightforward.
\end{proof}

\begin{lem} \label{lemnn2n}
Let $L$ be a $\z$-lattice and let $n$ be a positive integer.
Assume that $\min(L)\in \{n,n+1\}$ and $L$ represents all integers from $n+1$ to $2n-1$.
Assume further that there is an integer $s$ with $n\le s\le 14n$ such that $L$ represents all integers from $s$ to $s+2n-1$.
Then the $\z$-lattice $\langle n,n,2n\rangle \perp L$ is tight $\mathcal T(n)$-universal, and thus $t(n)\le \rank (L)+3$.
\end{lem}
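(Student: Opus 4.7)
The plan is to show that $M := \langle n,n,2n\rangle \perp L$ is tight $\mathcal T(n)$-universal; the rank bound $t(n) \leq \operatorname{rank}(L) + 3$ then follows immediately. Since $\min(M) = n$, no positive integer less than $n$ is represented by $M$, giving the tight part. For the $\mathcal T(n)$-universality I will decompose each $u \geq n$ as $u = nc + t$ with $t \in \{0\} \cup Q(L)$ and $c$ a nonnegative integer represented by $\langle 1,1,2\rangle$; since $\langle n,n,2n\rangle$ represents $nc$ exactly when $c$ is represented by $\langle 1,1,2\rangle$, this suffices. The classical fact I invoke is that $x^2+y^2+2z^2$ represents every nonnegative integer except those of the form $4^a(16b+14)$; in particular all exceptions are even, consecutive integers are never both exceptional, and every element of $\{0,1,\dots,13\}$ is represented.

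The range $u \in [n,2n-1]$ is immediate: for $u = n$ use $n = n\cdot 1^2$ from the ternary part, and for $u \in [n+1,2n-1]$ use the hypothesis $u \in Q(L)$. So fix $u \geq 2n$ and write $u = nq + r$ with $0 \leq r < n$ and $q \geq 2$. I will produce candidate $t$'s of three types: $t = 0$, giving $c = q$ (valid only when $r = 0$); $t_0 := n+r \in [n+1,2n-1]$, giving $c = q-1$ (valid only when $r \neq 0$); and the two values $t_1 < t_2 = t_1+n$ in $[s,s+2n-1]$ congruent to $r \pmod n$, which give $c$-values differing by exactly $1$.

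The analysis splits on the position of $u$ relative to $t_1,t_2$. If $u \geq t_2$ then both $t_1,t_2$ are admissible and the two corresponding $c$'s are consecutive, so one of them is odd and hence not of the form $4^a(16b+14)$. If $t_1 \leq u < t_2$ then $u - t_1 \in [0,n)$ is a multiple of $n$, forcing $u = t_1$ and $c = 0$. If $u < t_1$ I fall back on $t_0$ (when $r \neq 0$) or $t = 0$ (when $r = 0$); the hypothesis $s \leq 14n$ yields $t_1 \leq n\lceil s/n\rceil \leq 14n$ in the $r=0$ case and $t_1 \leq s+n-1 \leq 15n-1$ in general, and combining with $u < t_1$ forces the resulting $c$ to lie in $\{0,1,\dots,13\}$, hence to be represented.

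The main obstacle is the borderline $u = 14n$ (i.e., $r = 0$ and $q = 14$), where the naive fallback $c = q = 14$ would land in the exceptional set $\{14,30,46,56,\dots\}$. The numerical hypothesis $s \leq 14n$ is exactly what rules this out: it forces the smallest multiple of $n$ at least $s$ to be at most $14n = u$, so $t_1 \leq u$ and $u$ falls under Case 1 or Case 2 rather than the fallback Case 3. Beyond this point the argument is routine verification that the candidate $c$'s produced in each case avoid the exceptional arithmetic progression, using only parity and the bound $c \leq 13$ derived from $s \leq 14n$.
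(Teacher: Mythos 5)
Your proof is correct and follows essentially the same route as the paper: both decompose $u$ as $nc+t$ with $t\in\{0\}\cup[n+1,2n-1]$ or $t\in[s,s+2n-1]$, use that $\langle 1,1,2\rangle$ represents $0,\dots,13$ and every odd integer (one of two consecutive $c$-values being odd), and invoke $s\le 14n$ to glue the two ranges together. Your case analysis is just a per-residue-class reorganization of the paper's argument, with the role of the bound $s\le 14n$ spelled out slightly more explicitly.
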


\begin{proof}
Let $K=\langle n,n,2n\rangle \perp L$.
From the fact that $\langle 1,1,2\rangle$ represents all integers from $0$ to $13$, and the assumption that $L$ represents all integers from $n+1$ to $2n-1$, one may easily deduce that all integers from $n$ to $14n-1$ are represented by $K$. Let $m$ be any integer greater than or equal to $s$. Let $a$ be any integer with $s\le a \le s+n-1$ such that $m\equiv a \Mod n$. Then there is a nonnegative integer $u$ such that $m=nu+a$. If $u=0$, then clearly, $m=a \lra K$. Now, assume that $u\ge 1$. Since either $u$ or $u-1$ is odd and any odd integer is represented by $\langle 1,1,2\rangle$, either $nu$ or $n(u-1)$ is represented by $\langle n,n,2n\rangle$. Furthermore, since both $a$ and $a+n$ are represented by $L$ from the assumption,    the integer
$$
m=nu+a=n(u-1)+a+n
$$
is represented by $K$.  Now, the lemma follows from the assumption that $s\le 14n$.
\end{proof}
\begin{prop}
For any integer $n$ with $4\le n\le 14$, we have
$$
t(n) \le \begin{cases} n+1\quad &\text{if $4\le n\le 6$},\\
7\quad   &\text{if $7\le n\le 14$}.
\end{cases}
$$
\end{prop}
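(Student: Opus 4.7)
The argument splits naturally into two ranges. For $4 \le n \le 6$, Proposition~\ref{propdiag1} already establishes that the diagonal lattice $X_n = \langle n, n+1, \ldots, 2n\rangle$ is tight $\mathcal T(n)$-universal of rank $n+1$, so the bound $t(n) \le n+1$ is immediate.

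For $7 \le n \le 14$, the plan is to apply Lemma~\ref{lemnn2n}. For each such $n$, I would exhibit a quaternary $\z$-lattice $L_n$ satisfying (i) $\min(L_n) \in \{n, n+1\}$, (ii) $L_n$ represents every integer in $[n+1, 2n-1]$, and (iii) there exists $s$ with $n \le s \le 14n$ such that $L_n$ represents every integer in $[s, s+2n-1]$. Given such an $L_n$, the lemma yields that $\langle n, n, 2n\rangle \perp L_n$ is tight $\mathcal T(n)$-universal of rank $7$, hence $t(n) \le 7$.

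The candidates for $L_n$ come from the escalation data recorded just above Lemma~\ref{lemnnnn}: for $n \in \{2, \ldots, 7\}$ that table already lists many quaternary $\z$-lattices of minimum $n$ representing long initial intervals of integers, and analogous escalation searches produce candidates of minimum $n$ or $n+1$ for $8 \le n \le 14$. For each fixed $n$ one selects a suitable $L_n$ and verifies (ii) and (iii) by finite computation: (ii) is a check on the $n-1$ integers in $[n+1, 2n-1]$, and (iii) asks for a single window of $2n$ consecutive represented integers starting at some $s \le 14n$.

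The main obstacle is condition (iii): producing an entire consecutive window of length $2n$ represented by a rank-$4$ lattice becomes harder as $n$ grows. For the larger values of $n$ one expects to need to choose $L_n$ so that its genus has small class number (ideally $h(L_n)=1$), reducing the question of representability to local conditions that can be decided by the sort of $p$-adic analysis already employed in Propositions~\ref{propdiag1}, \ref{propdiag2}, and \ref{propg3}. Once an appropriate $L_n$ is pinned down for each of the eight values $n = 7, \ldots, 14$, the remaining verification is entirely mechanical.
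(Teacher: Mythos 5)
Your strategy coincides exactly with the paper's: Proposition \ref{propdiag1} disposes of $4\le n\le 6$, and Lemma \ref{lemnn2n} applied to a suitable quaternary lattice gives $t(n)\le 3+4=7$ for $7\le n\le 14$. The difference is that the paper actually exhibits the eight quaternary Gram matrices $L(7),\dots,L(14)$ together with the starting points $s=7,8,9,98,124,112,80,156$, after which verifying the hypotheses of Lemma \ref{lemnn2n} is a finite mechanical check. Your proposal stops precisely where the substantive work begins: you assert that appropriate lattices $L_n$ can be produced by escalation-type searches, but you do not produce them, and nothing you cite guarantees their existence. This is a genuine gap, not a routine omission, because the existence of a rank-$4$ lattice with minimum in $\{n,n+1\}$ that represents all of $[n+1,2n-1]$ \emph{and} a full window of $2n$ consecutive integers starting at some $s\le 14n$ is exactly the content of the proposition for $7\le n\le 14$; as written, your argument only shows that \emph{if} such lattices exist then $t(n)\le 7$.

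Two further cautions. First, the escalation data you point to cannot simply be reused: the paper's item (4) shows there is \emph{no} tight $\mathcal T(n)$-universal quaternary lattice for $10\le n\le 12$, so for $n\ge 10$ the lattices $L(n)$ are not drawn from a list of universal candidates but must be found by a separate search for the weaker properties required by Lemma \ref{lemnn2n}. Second, your suggestion to arrange $h(L_n)=1$ so that representability becomes a local question is not what the paper does here (it verifies the finitely many representations directly), and for quaternary lattices of minimum $14$ class number one is not something you can simply ``expect''; if you want to go that route you must exhibit such a lattice and compute its class number.
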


\begin{proof}
Note that $t(n) \le n+1$ for any $n \ge 4$ by Proposition \ref{propdiag1}.  We define 
$$
\begin{array}{ll}
L(7)=\begin{pmatrix}7&0&3&3\\0&7&3&1\\3&3&7&1\\3&1&1&7\end{pmatrix},&
L(8)=\begin{pmatrix}8&1&4&3\\1&9&1&4\\4&1&9&0\\3&4&0&9\end{pmatrix},\\[2.5em]
L(9)=\begin{pmatrix}9&3&-3&1\\3&9&1&3\\-3&1&10&3\\1&3&3&11\end{pmatrix},&
L(10)=\begin{pmatrix}10&0&1&3\\0&10&3&4\\1&3&11&-3\\3&4&-3&11\end{pmatrix},\\[2.5em]

\end{array}
$$
$$
\begin{array}{ll}
L(11)=\begin{pmatrix}11&-1&1&3\\-1&11&3&4\\1&3&12&-4\\3&4&-4&12\end{pmatrix},&
L(12)=\begin{pmatrix}12&1&4&0\\1&13&-3&3\\4&-3&13&6\\0&3&6&14\end{pmatrix},\\[2.5em]
L(13)=\begin{pmatrix}13&0&3&4\\0&14&5&5\\3&5&14&-3\\4&5&-3&15\end{pmatrix},&
L(14)=\begin{pmatrix}14&2&2&6\\2&15&2&7\\2&2&16&-5\\6&7&-5&17\end{pmatrix}.\\[2.5em]
\end{array}
$$
If we take $s=7,8,9,98,124,112,80,156$, when $n=7,8,\cdots,14$, respectively, then one may easily check by direct computations that for each $n$, the corresponding $\z$-lattice $L(n)$ and the integer $s$ satisfy all conditions given in  Lemma \ref{lemnn2n}.  This completes the proof. \end{proof}
For a positive integer $u$ and an integer $v$, we define a quinary $\z$-lattice 
$$
K(u,v)=\begin{pmatrix}2u+v&u&u&u&u-4\\u&2u+1&0&0&0\\u&0&2u+2&0&0\\u&0&0&2u+4&0\\u-4&0&0&0&2u\end{pmatrix}.
$$
\begin{prop}
For any integer $n$ with  $15\le n\le 21$, we have $t(n)\le 8$.
\end{prop}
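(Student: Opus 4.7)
The plan is to apply Lemma \ref{lemnn2n} to $L = K(u,v)$, with $u$ and $v$ chosen as functions of $n$. If the three hypotheses of the lemma are verified, then $\langle n,n,2n\rangle \perp K(u,v)$ is tight $\mathcal T(n)$-universal of rank $8$, giving $t(n) \le 8$.

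For each $n$ with $15 \le n \le 21$, the natural candidate is $u = \lceil n/2 \rceil$ together with a small value of $v$, chosen so that either $2u+v$ or $2u$ equals $n$; this already guarantees $\min(K(u,v)) \le n$. Writing $\be_1, \ldots, \be_5$ for the basis in which the given Gram matrix of $K(u,v)$ is expressed, a direct computation yields $Q(\be_1 - \be_2) = 2u+v+1$, $Q(\be_1 - \be_3) = 2u+v+2$, $Q(\be_1 - \be_4) = 2u+v+4$, and $Q(\be_1 - \be_5) = 2u+v+8$. Together with a finite inspection of the remaining short vectors, these computations show that $\min(K(u,v)) \in \{n, n+1\}$ for the chosen parameters.

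Next I would verify that $K(u,v)$ represents every integer in $[n+1, 2n-1]$. Several of these integers appear as diagonal entries or as $Q(\be_1 \pm \be_i)$, and the remaining few are obtained from small combinations such as $\be_i \pm \be_j$ or $2\be_1 - \be_i - \be_j$. Since this interval has length at most $20$ for $n \le 21$, the verification reduces to a bounded combinatorial check per value of $n$.

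The central step is to locate an integer $s \in [n, 14n]$ such that every integer in the block $[s, s + 2n - 1]$ is represented by $K(u,v)$. I would exploit the diagonal quaternary sublattice $M = \langle 2u, 2u+1, 2u+2, 2u+4 \rangle$ contained in $K(u,v)$, whose representation behavior can be analyzed via its class-number-one ternary sublattices; additional vectors involving $\be_1$ then fill in the residue classes that $M$ alone misses. This is the principal obstacle, since the block has length up to $42$ and its representability depends on a careful covering of arithmetic residues modulo small integers. I expect it to be overcome by exhibiting, for each of the seven values of $n$, explicit parameters $(u, v, s)$ for which $[s, s + 2n - 1] \subseteq Q(K(u,v))$ is verified by direct computation, after which Lemma \ref{lemnn2n} delivers the conclusion.
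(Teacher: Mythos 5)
Your proposal is correct and follows essentially the same route as the paper: the paper likewise applies Lemma \ref{lemnn2n} to the quinary lattice $K(u,v)$ with $u=\lceil n/2\rceil$ (namely $(u,v)=(8,0),(9,2),(10,3),(11,3)$ and $s=49,57,96,106$ for $n=15,16$; $17,18$; $19,20$; $21$ respectively), verifying the hypotheses by direct computation. The only nitpick is that for odd $n$ one has $2u=n+1$, so neither diagonal entry equals $n$ and the minimum is $n+1$ rather than $\le n$; this is harmless since Lemma \ref{lemnn2n} only requires $\min(L)\in\{n,n+1\}$, which is the condition you actually verify.
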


\begin{proof}
One may directly check that for each integer $n$, the corresponding quinary $\z$-lattice $K(u,v)$ and the integer $s$ satisfy all conditions given in Lemma \ref{lemnn2n}  in the following table:

\begin{table} [ht]
\begin{tabular}{|c|c|c|c|c|}
\hline
$n$ & 15,16 & 17,18 & 19,20 & 21\\
\hline
$(u,v)$ & $(8,0)$ & $(9,2)$ & $(10,3)$ & $(11,3)$\\
\hline
$s$ & 49 & 57 & 96 & 106\\
\hline
\end{tabular}
\end{table}

Hence the proposition follows immediately from Lemma \ref{lemnn2n}.
\end{proof}
For a positive integer $u$ and an integer $v$, we define a senary $\z$-lattice $L(u,v)$ as
$$
L(u,v)=\begin{pmatrix}2u+v&u&u&u-2&u-4&u-8\\u&2u+1&0&0&0&0\\u&0&2u+2&0&0&0\\u-2&0&0&2u&0&0\\u-4&0&0&0&2u&0\\u-8&0&0&0&0&2u\end{pmatrix}.
$$
\begin{prop}
For any integer $n$ with $22\le n\le 36$, we have $t(n)\le 9$.
\end{prop}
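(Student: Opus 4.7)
\medskip

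\noindent\textbf{Proof proposal.} The plan is to apply Lemma \ref{lemnn2n} to $L=L(u,v)$ for a suitable choice of parameters $(u,v)$ and auxiliary integer $s$ depending on $n$, exactly as in the previous two propositions. Since $L(u,v)$ has rank $6$, Lemma \ref{lemnn2n} will yield a tight $\mathcal T(n)$-universal $\z$-lattice $\langle n,n,2n\rangle \perp L(u,v)$ of rank $9$, giving $t(n)\le 9$.

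First I would fix the rough size of $u$. Since the diagonal entries of $L(u,v)$ are $2u+v, 2u+1, 2u+2, 2u, 2u, 2u$, the minimum of $L(u,v)$ is essentially governed by $2u$, so for each $n\in\{22,\dots,36\}$ one expects to take $u\approx n/2$ (and $v$ small) so that $\min(L(u,v))\in\{n,n+1\}$ as required by the hypothesis of Lemma \ref{lemnn2n}. The structure of $L(u,v)$ is designed with this application in mind: the five mutually orthogonal ``tail'' vectors of norms $2u+1,\,2u+2,\,2u,\,2u,\,2u$ connected to a single anchor vector via the off-diagonal entries $u,u,u-2,u-4,u-8$ gives a rich supply of small representations while keeping the Gram matrix positive definite. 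I would therefore produce a short table giving an explicit pair $(u,v)$ for each $n\in\{22,\dots,36\}$ (fifteen cases in all) of exactly the form already used for the ranks $4$--$8$ analogues.

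For each candidate $(u,v)$, there are three conditions to check, all of them finite verifications: (i) $\min(L(u,v))\in\{n,n+1\}$; (ii) $L(u,v)$ represents every integer from $n+1$ to $2n-1$; and (iii) there exists $s$ with $n\le s\le 14n$ such that $L(u,v)$ represents all integers in $[s,\,s+2n-1]$. Conditions (i) and (ii) are straightforward to verify by direct enumeration: $\min(L(u,v))$ is simply the smallest value of $Q$ on a finite search region around the Gram diagonal, and $2n-1-n=n-1\le 35$ values need to be hit in a short range. Condition (iii) is the genuine obstacle and is where the real search occurs: one has to scan $s$ in the window $[n,14n]$ and, for each candidate $s$, confirm that each of the $2n$ consecutive integers from $s$ to $s+2n-1$ is a value of the senary quadratic form $L(u,v)$. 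This is a routine but nontrivial computer check, and the role of the parameter $v$ is precisely to tune residue classes so that such a window of length $2n$ exists below $14n$.

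Once (i)--(iii) are verified for the chosen $(u,v,s)$ for every $n\in\{22,\dots,36\}$, an appeal to Lemma \ref{lemnn2n} immediately gives that $\langle n,n,2n\rangle \perp L(u,v)$ is tight $\mathcal T(n)$-universal, whence $t(n)\le \rank(L(u,v))+3=9$. I would present the output as a table analogous to the one in the previous proposition, listing $(u,v)$ and $s$ for each $n$, and conclude with one sentence invoking Lemma \ref{lemnn2n}. The main effort, as noted, lies entirely in the computational search for the correct triples $(u,v,s)$; no further conceptual input is needed.
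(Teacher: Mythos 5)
Your proposal follows exactly the paper's argument: choose $(u,v)$ with $u\approx n/2$ so that $\min(L(u,v))\in\{n,n+1\}$, verify the finite representation conditions of Lemma \ref{lemnn2n} computationally, and conclude $t(n)\le \rank(L(u,v))+3=9$. The paper simply supplies the explicit table of $(u,v)$ and $s$ (grouping the fifteen values of $n$ into eight cases that share parameters), so your approach is correct and essentially identical, modulo carrying out the computation you describe.
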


\begin{proof}
One may directly check that for each integer $n$, the corresponding senary $\z$-lattice $L(u,v)$ and the integer $s$ satisfy all conditions given in Lemma \ref{lemnn2n} in the following table:

\newpage
\begin{table} [ht]
\begin{tabular}{|c|c|c|c|c|c|c|c|c|}
\hline
$n$ & 22 & 23,24 & 25,26 & 27,28 & 29,30 & 31,32 & 33,34 & 35,36 \\
\hline
$(u,v)$ & $(11,0)$ & $(12,0)$ & $(13,0)$ & $(14,1)$ & $(15,1)$ & $(16,2)$ & $(17,3)$ & $(18,4)$\\
\hline 
$s$ & 22 & 24 & 61 & 65 & 71 & 81 & 86 & 93\\
\hline
\end{tabular}
\end{table}

Hence the proposition follows immediately from Lemma \ref{lemnn2n}.
\end{proof}

\section{Lower and upper bounds for $t(n)$}
In this section, we find some explicit lower and upper bounds for the minimum rank $t(n)$ of tight $\mathcal T(n)$-universal $\z$-lattices.  
 
\begin{thm} \label{thmlowerB}  For any positive integer $n$, we have 
$$
\max(4,\log_2(n+4)) \le t(n).
$$ 
In particular, the minimum rank of tight $\mathcal T(n)$-universal $\z$-lattices goes to infinity as $n$ increases. 
\end{thm}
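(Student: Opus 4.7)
The bound $t(n) \ge 4$ is already observed in the introduction (no ternary $\z$-lattice is $\mathcal T(n)$-universal), so the task is to prove the logarithmic part $t(n) \ge \log_2(n+4)$. My plan is to exhibit $n+4$ distinct classes in the $\f_2$-vector space $L/2L$ for any tight $\mathcal T(n)$-universal lattice $L$ of rank $k$, which forces $2^k \ge n+4$.  For each $\bx \in L$ I write $[\bx]$ for its class in $L/2L$.  Since $\log_2(n+4) > 4$ only for $n \ge 13$, I may freely assume $n \ge 3$.

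For each $m \in \{n, n+1, \ldots, 2n+2\}$ I pick a vector $\bv_m \in L$ with $Q(\bv_m) = m$.  The driving observation is: if $[\bv] = [\bw]$ in $L/2L$ then $\bv \pm \bw \in 2L$, so $Q(\bv \pm \bw) \in 4\z_{\ge 0}$; since $\min(L) = n$, each of these quantities is either $0$ or at least $4n$.  Applied to $\bv_m, \bv_{m'}$ with $m \ne m'$ and summed, this yields $m + m' \ge 4n$, while the congruence $Q(\bx + 2\by) \equiv Q(\bx) \pmod 4$ forces $m \equiv m' \pmod 4$.  A short case analysis over $m, m' \in \{n, \ldots, 2n+2\}$ shows these conditions can be simultaneously met only for $(m,m') = (2n-2,\, 2n+2)$.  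Moreover each $[\bv_m] \ne 0$, because $\bv_m \in 2L$ would force the existence of a nonzero lattice vector of norm $m/4 < n$.

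If $[\bv_{2n-2}] \ne [\bv_{2n+2}]$ then $\{0\} \cup \{[\bv_m] : n \le m \le 2n+2\}$ already has $n+4$ distinct members and we are done.  In the opposite case I plan to set $\bu_2 = \tfrac12(\bv_{2n+2} + \bv_{2n-2})$ and $\bw_2 = \tfrac12(\bv_{2n+2} - \bv_{2n-2})$, both of which lie in $L$.  The inequalities $Q(\bu_2), Q(\bw_2) \ge n$, applied to $4Q(\bu_2) = 4n + 2B(\bv_{2n+2},\bv_{2n-2})$ and $4Q(\bw_2) = 4n - 2B(\bv_{2n+2},\bv_{2n-2})$, will force $B(\bv_{2n+2},\bv_{2n-2}) = 0$ and $Q(\bu_2) = Q(\bw_2) = n$; expanding $Q(\bu_2 + \bw_2) = 2n+2$ then yields $B(\bu_2,\bw_2) = 1$.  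After replacing $\bv_n$ by $\bu_2$ (still a norm-$n$ vector), I claim $[\bw_2]$ is a new nonzero class distinct from every $[\bv_m]$: running the same parity-mod-$4$ and sum tests for a hypothetical identification $[\bw_2] = [\bv_m]$ produces $m \equiv n \pmod 4$ together with $m + n \ge 4n$, i.e.\ $m \ge 3n$, which is incompatible with $m \le 2n+2$ once $n \ge 3$; the boundary case $m = n$ is ruled out because $\bu_2 \ne \pm\bw_2$ (from $B(\bu_2,\bw_2) = 1 \ne \pm n$).  Adjoining $[\bw_2]$ restores the count to $n+4$.

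The main obstacle will be this exceptional coincidence $[\bv_{2n-2}] = [\bv_{2n+2}]$: it cannot be ruled out outright, because it is realized by a concrete configuration of two shortest vectors $\bu_2, \bw_2$ with $B(\bu_2,\bw_2) = 1$ whose sum and difference are orthogonal.  The argument succeeds precisely because that configuration simultaneously supplies the supplementary class $[\bw_2]$ needed to close the count.  The small values $n \in \{1,2\}$ fall outside the rank-counting but are covered by the separately noted bound $t(n) \ge 4$, completing the proof of the stated inequality.
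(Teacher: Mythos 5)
Your proposal is correct and follows essentially the same route as the paper: counting classes in $L/2L$ via the estimate $Q(\bx)+Q(\by)\ge 4n$ for distinct representatives of a common nonzero class together with the mod-$4$ congruence, isolating the single possible coincidence $[\bv_{2n-2}]=[\bv_{2n+2}]$, and recovering the lost class from the two norm-$n$ vectors $\tfrac12(\bv_{2n+2}\pm\bv_{2n-2})$. The only cosmetic difference is that you count all of $L/2L$ (getting $n+4\le 2^k$) where the paper counts nonzero cosets (getting $n+3\le 2^k-1$), which is the same bound.
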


\begin{proof} Let $L$ be a tight $\mathcal T(n)$-universal $\z$-lattice of rank $k$.  Since $L$ represents all integers over $\z_p$ for any prime $p$,  $k$ is greater than or equal to $4$.  Recall that any ternary $\z$-lattice $\ell$ has always an anisotropic prime, say $q$, and $\ell$ does not represent some integer over $\z_q$. 

For each coset $\gamma \in (L/2L)^{\times}$,  the minimum of  $\gamma$ is defined by
$$
\min(\gamma)=\min\{ Q(\bx) : x \in \gamma\}.
$$
Let $\alpha \in (L/2L)^{\times}$ be a coset  and let $\bx,\by \in \alpha$ be vectors such that $\bx \ne \pm \by$. Since $\bx\pm \by \in (2L)^{\times}$,  we have
$$
Q(\bx\pm \by)=Q(\bx)\pm 2B(\bx,\by)+Q(\by) \ge 4n.
$$ 
Therefore we have $Q(\bx)+Q(\by) \ge 4n$. For each positive integer $m$ greater than or equal to $n$, we fix a vector $\bx_m \in L$  such that $Q(\bx_m)=m$. Let $\alpha_m$ be the coset in $(L/2L)^{\times}$ containing the vector $\bx_m$.     
Then for any different integers $r$ and  $s$ with $n\le r,s\le 2n$, one may easily check that $\alpha_r \ne \alpha_s$ from the above observation. 

Now, suppose that there is an integer $u$ with $n\le u\le 2n$ such that $\bx_{2n+1} \in \alpha_u$.  Then since there is a vector $\by \in L$ such that $\bx_{2n+1}=\bx_u+2\by$, we have $2n+1 \equiv u \Mod 4$. Furthermore, since $2n+1+u \ge 4n$, such an integer $u$ does not exist.  Suppose that $\bx_{2n+2} \in \bigcup_{u=n}^{2n+1} \alpha_u$.  Then by using a similar argument given above, one may easily show that the only coset which could possibly contain $\bx_{2n+2}$ is $\alpha_{2n-2}$.  
Since $\bx_{2n+2} \pm \bx_{2n-2} \in (2L)^{\times}$, we have
$$
Q(\bx_{2n+2}+\bx_{2n-2})=2n+2+2n-2\pm2B(\bx_{2n+2},\bx_{2n-2}) \ge 4n.
$$
This implies that $B(\bx_{2n+2},\bx_{2n-2})=0$. Therefore we have 
$$
Q\left(\frac {\bx_{2n+2}+\bx_{2n-2}}2\right)=Q\left(\frac {\bx_{2n+2}-\bx_{2n-2}}2\right)=n.
$$
Consequently,  there are at least two different cosets whose minimum is $n$.  Summing up all, there are at least $n+3$ different cosets in $(L/2L)^{\times}$, and hence $n+3 \le 2^k-1$. This completes the proof.
\end{proof}

\begin{rmk} \label{lower5}   {\rm Note that $t(n) \ge 5$ for any $n \ge 13$ from the above theorem. As mentioned in Section 3, we have checked by using the escalation method that  there is no quaternary tight $\mathcal T(n)$-universal $\z$-lattice for any integer $n$ with $10\le n\le 12$. Therefore $t(n)\ge 5$ for any integer $n$ greater than or equal to $10$.}  
\end{rmk}

As we have shown in Proposition \ref{propdiag1}, $t(n) \le n+1$. Now, we find some explicit upper bound for $t(n)$, which is better than this.
\begin{thm} For any positive integer $n$, we have
$$
t(n)\le \left[ \frac{\left[ \sqrt n\right]-1}2\right]+\left[\sqrt n\right]+7.
$$
\end{thm}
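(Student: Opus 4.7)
The plan is to apply Lemma~\ref{lemnn2n}: it suffices to exhibit a $\z$-lattice $L$ of rank at most $[\sqrt n]+\left[(\sqrt n-1)/2\right]+4$ satisfying (a)~$\min(L)\in\{n,n+1\}$, (b)~$L$ represents every integer in $[n+1,2n-1]$, and (c)~$L$ represents every integer in some window $[s,s+2n-1]$ with $n\le s\le 14n$. Once such an $L$ is built, Lemma~\ref{lemnn2n} immediately yields $t(n)\le\rank(L)+3\le[\sqrt n]+\left[(\sqrt n-1)/2\right]+7$.

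Set $k=[\sqrt n]$ and $\ell=\left[(\sqrt n-1)/2\right]$. The construction of $L$ generalizes the rank-$5$ and rank-$6$ explicit matrices $K(u,v)$ and $L(u,v)$ of Section~3. Decompose $L$ into a diagonal block $\langle n+1,n+2,\ldots,n+k\rangle$ of rank $k$, which already covers the ``short offsets'' $\{n+1,\ldots,n+k\}$ in $[n+1,2n-1]$, together with an $(\ell+4)$-dimensional ``center/arms'' block: a central vector $\be_0$ with $Q(\be_0)\approx n$, and $\ell+3$ mutually orthogonal arms $\bd_1,\ldots,\bd_{\ell+3}$ each of norm close to $n$. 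The off-diagonals $B(\be_0,\bd_j)$ are tuned so that the quantities $a_j:=Q(\bd_j)-2B(\be_0,\bd_j)$ have subset sums covering every integer in $[k+1,n-1]$; a few arms carry $B(\be_0,\bd_j)$ close to $Q(\be_0)/2$ (giving ``fine corrections'' like $1,2,4,\ldots$), while the remaining arms have smaller $B(\be_0,\bd_j)$ yielding ``coarse'' $a_j$'s spaced roughly like $k,2k,4k,\ldots$. Since $B(\bd_i,\bd_j)=0$ for $i\neq j$, any vector $\be_0-\sum_{j\in S}\bd_j$ has norm exactly $Q(\be_0)+\sum_{j\in S}a_j$, and these vectors sweep out the remaining integers in $[n+k+1,2n-1]$.

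The proof then consists of: (i) specifying the $Q(\bd_j)$'s and $B(\be_0,\bd_j)$'s explicitly so the $a_j$'s produce the claimed subset-sum coverage; (ii) verifying positive definiteness of the Gram matrix via the Schur complement of the central row, which reduces to $\sum_j B(\be_0,\bd_j)^2/Q(\bd_j)<Q(\be_0)$; (iii) checking $\min(L)\in\{n,n+1\}$ by ruling out vectors of norm $<n$; (iv) verifying that every $m\in[n+1,2n-1]$ lies in $Q(L)$, by writing $m-Q(\be_0)$ as a subset sum of the $a_j$'s (or using a basis vector of the diagonal block when $m\le n+k$); and (v) producing $s$ with $n\le s\le 14n$ such that $[s,s+2n-1]\subseteq Q(L)$, by the same combinatorial argument but allowing basis-vector coefficients of absolute value up to $2$. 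The principal obstacle is the clash between (ii) and (iv): each arm with $B(\be_0,\bd_j)$ close to $Q(\be_0)/2$ contributes $\sim n/4$ to the Schur complement sum, so only a bounded number of such arms is allowed; this forces the larger $a_j$'s to grow in steps of $\sim k$, which in turn requires $\sim n/k=\sqrt n$ additional arms to span $[k+1,n-1]$, and this explains the $\tfrac{3}{2}\sqrt n$ scaling rather than the $\log_2 n$ one would get from a purely binary construction.
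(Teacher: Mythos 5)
There is a genuine gap, and it is exactly the one you flag as ``the principal obstacle'': your sketch does not resolve it, and in fact no choice of parameters in the star-shaped (center plus mutually orthogonal arms) topology can. Write $d_j=Q(\bd_j)$ and $B_j=B(\be_0,\bd_j)$, so $a_j=d_j-2B_j$ and the Schur-complement condition for positive definiteness is $\sum_j B_j^2/d_j<Q(\be_0)\approx n$. Since $\min(L)\ge n$ forces $d_j\ge n$, an arm with $a_j=d_j-2B_j$ small compared to $n$ has $B_j\approx d_j/2$ and contributes $B_j^2/d_j\approx d_j/4\ge n/4$ to that sum; so at most three arms can have $a_j=o(n)$. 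But this applies to your ``coarse'' arms as well as the ``fine'' ones: an arm with $a_j\approx jk$ contributes $(d_j-jk)^2/(4d_j)\approx(n-jk)^2/(4n)$, and summing over $j=1,\dots,\sqrt n$ gives $\Theta(n^{3/2})$, not $O(n)$. More fundamentally, covering every integer of $[k+1,n-1]$ by subset sums of the $a_j$ requires unboundedly many arms with $a_j\ll n$ (already to realize all of $k+1,k+2,\dots,2k$ one needs several such arms), so conditions (ii) and (iv) of your plan are jointly unsatisfiable for large $n$. The construction cannot be completed as described.

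The paper's proof avoids this by abandoning the star in favor of a cycle. It takes $M_1=\z\bx_1+\dots+\z\bx_m$ with $B(\bx_i,\bx_i)=2[n/2]$, $B(\bx_i,\bx_j)=-[n/2]$ for $|i-j|\in\{1,m-1\}$, and $0$ otherwise (a scaled affine $A$-type configuration, positive semidefinite with radical $\z(\bx_1+\dots+\bx_m)$), and a diagonal $M_2=\z\by_1+\dots+\z\by_m$ with one $Q(\by_i)=1$, $s_n$ of them equal to $2$, and the rest equal to $[\sqrt n]$; then $L=\sum\z(\bx_i+\by_i)\subset M_1\perp M_2$. Positive definiteness is free, since $Q=Q_{M_1}+Q_{M_2}$ with $Q_{M_1}\ge0$ and $Q_{M_2}$ positive definite, so the Schur-complement bottleneck never arises. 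The representation mechanism is also different: every sum of $\bx_i$ over a (cyclically) consecutive arc has $M_1$-norm exactly $2[n/2]$, so $Q\bigl(\sum_{u=i}^{j}(\bx_u+\by_u)\bigr)=2[n/2]+\sum_{u=i}^{j}Q(\by_u)$, and one only has to realize each $u\in[1,n]$ as $[\sqrt n]\cdot q_0+2t_0$ or $[\sqrt n]\cdot q_0+2t_0+1$ by choosing which arc to sum over --- consecutive-arc sums around a cycle rather than arbitrary subset sums from a star. Finally, the paper then applies Lemma~\ref{lemnnnn} (adding $\langle n,n,n,n\rangle$, rank $+4$) rather than Lemma~\ref{lemnn2n}, so it never needs your condition (c) about a window $[s,s+2n-1]$; with $\rank(L)=s_n+[\sqrt n]+3$ this gives the same final bound $s_n+[\sqrt n]+7$. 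If you want to salvage your approach, you would need to replace the orthogonal-arms block by a configuration of this telescoping type.
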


\begin{proof}   Since $t(n)=4$ for any integer $n$ with $1\le n\le 3$, we always assume that $n\ge 4$.
For simplicity, we define
$$
s_n=\left[ \frac{\left[ \sqrt n\right] -1}2\right].
$$
Put $m=s_n+\left[ \sqrt n\right]+3$ and let $M_1=\z \bx_1+\z \bx_2+\dots+\z \bx_{m}$ be the $\z$-lattice of rank $m$ such that
$$
B(\bx_i,\bx_j)=\begin{cases} 2\left[\frac n2\right] \quad &\text{if $i=j$},\\
-\left[\frac n2\right] \quad &\text{if $\vert i-j\vert=1$ or $m-1$},\\
0\quad &\text{otherwise}. \end{cases}
$$
Note that $M_1$ is a positive semi-definite even $\z$-lattice with  
$$
Q(\bx_1+\bx_2+\dots+\bx_m)=0,
$$
and
$$
Q(\bx) \ge 2\left[\frac n2\right]  \ \ \text{for any $\bx \in M_1-\z(\bx_1+\bx_2+\dots+\bx_m)$}.
$$
In fact, one may easily check that 
$$
\z \bx_i+\z \bx_{i+1}+\dots+\z \bx_{i+m-2} \simeq A_{m-1}^{\left[\frac n2\right]}
$$
for any $i$ with $1\le i \le m$. Here we are assuming that $\bx_{\nu}=\bx_{\nu-m}$ when $\nu >m$.   
Note that $A_{m-1}^{\left[\frac n2\right]}$ is the $\z$-lattice of rank $m-1$ obtained from the root lattice $A_{m-1}$ by scaling by $\left[\frac n2\right]$.                                    
Let $M_2=\z \by_1+\z \by_2+\dots+\z \by_m$ be the diagonal $\z$-lattice of rank $m$ such that
$$
Q(\by_i)=\begin{cases} 1 \quad &\text{if $i=1$}, \\
                    2 \quad &\text{if $2\le i\le s_n+1$},  \\
                    \left[ \sqrt n\right] \quad &\text{otherwise.}  \end{cases}
$$
          
Finally, let $L=\z(\bx_1+\by_1)+\z(\bx_2+\by_2)+\dots+\z(\bx_m+\by_m) \subset M_1\perp M_2$ be the $\z$-lattice of rank $m$. From the definition, one may easily show that 
$$
\min(L)=\min\left(2\left[\frac n2\right]+1, 1+2s_n+\left[\sqrt n\right]\cdot \left( \left[\sqrt n\right] +2\right) \right)=2\left[\frac n2\right]+1\ge n.
$$  

Now,  we show that $L$ represents all integers $k$ with $n+1\le k\le 2n-1$.
Let $u$ be an integer with $1\le u\le n$ such that $k=2\left[ \frac n2 \right]+u$.
One may easily show that there is an integer $q_0$ with $0\le q_0\le \left[ \sqrt n\right] +2$ and an integer $t_0$ with $0\le t_0\le s_n$ such that either $u=\left[ \sqrt n\right] \cdot q_0+2t_0$ or $u=\left[ \sqrt n\right] \cdot q_0+2t_0+1$.
For the former case, one may easily check that $2\le s_n-t_0+2\le s_n+q_0+1\le m$ and thus we have
$$
2\left[ \frac n2\right] +u=2\left[ \frac n2\right] +\left[ \sqrt n\right] \cdot q_0+2t_0=Q\left(\sum_{u=s_n-t_0+2}^{s_n+q_0+1}(\bx_u+\by_u)\right).
$$
For the latter case, 
since
$$
u=\left[ \sqrt n\right] \cdot q_0+2t_0+1 \le n<\left( \left[ \sqrt n\right] +1\right)^2\le \left[ \sqrt n\right] \cdot \left( \left[ \sqrt n\right] +2\right) +2t_0+1,
$$
we have  $q_0\le \left[ \sqrt n\right]+1$.  This implies that
$$
3+s_n\le m+1-q_0\le m+1+t_0\le m+1+s_n.
$$
Hence we have
$$
2\left[ \frac n2\right] +u=2\left[ \frac n2\right] +\left[ \sqrt n\right] \cdot q_0+2t_0+1=Q\left(\sum_{u=m+1-q_0}^{m+1+t_0}(\bx_u+\by_u)\right),
$$
where we are assuming that $\bx_{\nu}=\bx_{\nu-m}$ and $\by_{\nu}=\by_{\nu-m}$ when $m+1\le \nu \le m+1+t_0$.   

Now, since the $\z$-lattice $L$ represents all integers $k$ with $n+1\le k\le 2n-1$, the $\z$-lattice $\langle n,n,n,n\rangle \perp L$ is tight $\mathcal T(n)$-universal by Lemma \ref{lemnnnn}. Therefore we have
$$
t(n) \le \rank(L)+4\le s_n+\left[ \sqrt n\right] +7.
$$
This completes the proof.
\end{proof}


\end{document}